\theoremstyle{plain}
\newtheorem{thm}{Theorem}
\numberwithin{equation}{section}
\newtheorem{dfn}{Definition}
\newtheorem{rmk}{Remark}
\newtheorem{thm*}{Theorem}
\newtheorem{cor}{Corollary}
\newtheorem{prop}{Proposition}
\newtheorem{ex}{Example}
\newcommand{\dif}{{\rm d}}
\newcommand{\fof}{f\!\otimes\!f}
\DeclareMathOperator\sgn{sgn}
\begin{document}
\title[First variation]{First variation of the Hausdorff measure of non-horizontal submanifolds in sub-Riemannian stratified Lie groups}
\author{Marcos M. Diniz$^{1}$}
\address{$^{1}$ Faculdade de Matem\'{a}tica-UFPA,\\
66075-110-Bel\'em-PA, BR}
\email{mdiniz@ufpa.br}
\thanks{$^{1}$ Partially supported by CAPES-BR}
\author{Maria R. B. Santos$^{2}$}
\address{$^{2}$Departamento de Matem\'atica-UFAM,\\
 69077-070-Manaus-AM-BR}
\email{mrosilenesantos@gmail.com}
\thanks{$^{2}$ Partially supported by CAPES-BR}
\author{Jos\'{e} M. M. Veloso$^{3}$}
\address{$^{3}$ Faculdade de Matem\'{a}tica-UFPA,\\
66075-110-Bel\'em-PA, BR}
\email{veloso@ufpa.br}
\thanks{$^{3}$ Partially supported by CAPES-BR}
\keywords{First variation, spherical Hausdorff measure, non-horizontal submanifolds, stratified Lie groups, Heisenberg group, minimal measure.} 
\subjclass[2010]{Primary 53C17, 22E25, 28A75; Secondary 49Q15}
\begin{abstract}
We determine necessary conditions for a non-horizontal submanifold of a sub-Riemannian stratified Lie group to be of minimal measure. We calculate the first variation of the measure for a non-horizontal submanifold and find that the minimality condition implies the tensor equation $H+\sigma=0$, where $H$ is analogous to the mean curvature and $\sigma$ is the \emph{mean torsion}. We also discuss new examples  of minimal non-horizontal submanifolds in the Heisenberg group, in particular surfaces in $\mathbb H^2$.
\end{abstract}
\maketitle








\section{Introduction}\label{intro}
The last years have seen a generalization of Riemannian geometry to sub-Rie\-ma\-nnian geometry \cite{agrachev2012, gromov144, montgomery2006, strichartz1989, strichartz1986}.
A  sub-Riemannian manifold is a connected manifold $G$ with a distribution $D\subset TG$ such that successive Lie brackets fields in $D$ generate all the tangent space $TG$. In addition, a positive definite scalar product $\langle\cdot,\cdot\rangle$ is defined at $D$ such that it is possible to calculate the length of admissible curves, i.e., the tangent curves on $D$. As in Riemannian geometry, the distance $\rho$ between two points $p$ and $q$ in $G$ is defined as the infimum length of admissible curves that connect the points $p$ and $q$. By means of this distance, the sub-Riemannian manifold becomes a metric space \cite{Bellaiche1996} and can be endowed with a Hausdorff measure. Also, there is an equivalent of the Riemannian volume, the so called Popp's volume. Introduced in \cite{montgomery2006}, it is a smooth volume which is canonically associated with the sub-Riemannian structure and was used in \cite{agrachev2012} to define the sub-Laplacian in sub-Riemannian geometry.

On sub-Riemannian manifolds which are nilpotent Lie groups (stratified Lie gro\-u\-ps) the spherical Hausdorff measure, the Popp's measure and the Haar measure are mutually constant multiples one of another \cite{Agrachev2012a, ghezzi2014, Magnani2005, mitchell1985, barilari2013}. Then, it is natural to ask what means a submanifold of
a stratified Lie group to be of minimal measure.

The aim of this paper is, using the measure proposed by Magnani and Vittone for non-horizontal submanifolds in stratified Lie groups \cite{Magnani2008}, to calculate the first variation of these submanifolds and determine the necessary conditions for a non-horizontal submanifold to be of minimal measure. We will also give new examples of non-horizontal minimal submanifolds in the Heisenberg group, in particular minimal surfaces in 5-dimensional Heisenberg group . In order to present the main results, we briefly introduce some concepts which will be dealt with in detail in the subsequent sections.

Let $\mathbb G$ be a stratified Lie group with a graded  Lie algebra $\mathfrak{g}=\mathfrak{g}^1\oplus\cdots\oplus\mathfrak{g}^r$ with $r\geq1$. If $\langle\cdot,\cdot\rangle$ is a scalar product on $\mathfrak g^1$, we can extend it to $\mathfrak g$ by induction (see Proposition \ref{extend}).
We also consider the distribution $D\subset T\mathbb G$ generated by $\mathfrak g^1$ and the scalar product in $D$ generated by $\langle\cdot,\cdot\rangle$. This way, $(\mathbb G,D,\langle\cdot,\cdot\rangle)$ becomes a sub-Riemannian manifold, also called the Carnot group. Note that, traditionally the scalar product on $D$ is extended to $T\mathbb G$ and the Riemannian connection on $T\mathbb G$ is used to make calculations on $\mathbb G$. This observation will be our starting point. We shall work with a covariant derivative $\overline\nabla$ defined by $\overline\nabla X=0$ for all $X\in\mathfrak g$. It has intrinsic torsion which is essentially the negative of the Lie bracket in $\mathbb G$ and the zero curvature tensor. So, this covariant derivative permits us to establish an interesting parallel between the invariants of submanifolds in $\mathbb R^n$ and the invariants of submanifolds in $\mathbb G$.

The geometry of a submanifold $M$ of $\mathbb G$ at each point depends on the relative position of
$TM$ and $D$. The submanifolds with a ``high'' contact with $D$ at one point may have
singularities from the metric point of view, even though being a $C^\infty$ submanifold. In this paper,
we avoid these situations by considering a non-horizontal submanifold $M$ transverse to D,
i.e., $TM+D=T\mathbb G$.  For this submanifold the horizontal normal subspace $TM^\perp$ play the same role as the normal space does to a submanifold in $\mathbb R^n$. We define $TM^\perp$ as the orthogonal subspace to $TM\cap D$ in $D$, i.e., the horizontal distribution $D=(TM\cap D)\oplus TM^\perp$. Hence, $T\mathbb G=TM\oplus TM^\perp$ and we can use this decomposition
(in general not orthogonal!) to project $\overline\nabla$ to a connection $\nabla$ over $TM$.

Let $e_1,\ldots, e_n$ be a orthonormal basis of $\mathfrak{g}$ with its dual $e^1,\ldots,e^n$. Let $f_1,\ldots,f_n$ be a adapted frame in the $T\mathbb{G}$ such that $f_1,\ldots,f_p$ are orthogonal to $TM\cap D$ and $f_{p+1},\ldots,f_{d_1}$ is a orthonormal basis of $TM\cap D$ in $D$. We complete $f_{p+1},\ldots,f_{d_1}$ to a basis $f_{p+1},\ldots,f_n$ of $TM$ taking
$
f_j=e_j-\sum_{\alpha=1}^pA_j^\alpha f_{\alpha},
$
for $j=d_1+1,\ldots,n$. If we denote by $f^1,\ldots, f^n$ its dual basis, then the sub-Riemannian volume form on $\mathbb G$ is defined as $\dif V=e^1\wedge\cdots\wedge e^n=f^1\wedge\cdots\wedge f^n$. When $M$ is a hypersurface, the $H$-perimeter measure is traditionally used \cite{Montefalcone2007, Montefalcone2012, Danielli2007, Hladky2013}. For the non-horizontal submanifolds of codimension $p\geq 1$, the spherical Hausdorff measure has the following representation proved in \cite{Magnani2008}:
\begin{equation}\label{M}
\int_M\theta(\tau^d_M(x))\dif S^d_\rho(x)=\int_M|\tau^d_{M}(x)|\dif\mbox{vol}_{h}(x) \ ,
\end{equation}
where $h$ is a fixed Riemannian metric, $d$ is the Hausdorff dimension of $M$, $\theta(\tau^d_M(x))$ is the metric factor (see Section \ref{hspace}), $S^d_\rho$ is the	 $d$-spherical Hausdorff measure and $\dif\mbox{vol}_{h}$ is the Riemannian volume form on $M$ induced by $(\mathbb G,h)$.

We will denote by $\mu$ the measure in non-horizontal submanifolds defined by  $\dif\mu(x)=|\tau^d_{M}(x)|\dif\mbox{vol}_{h}(x)$, which  is a nat\-u\-ral can\-di\-date to de\-fine the vol\-ume for non-horiz\-ontal submanifolds.
If the metric factor $\theta(\tau^d_M(x))$ is constant on $M$ (the case of the Heisenberg group $\mathbb{H}^n$, see Section \ref{hspace}) the measure $\dif\mu(x)$ is a multiple of $(Q-p)$-spherical Hausdorff measure on $M$. Writing the density $\dif\mu$ in the  adapted frame  $f^1,\ldots,f^n$ we obtain a  beautiful formula which we will use for the variational calculation, namely
$
\dif \mu=f^{p+1}\wedge\cdots\wedge f^n
$ (see Theorem \ref{dS}).

The second result of this paper (Theorem \ref{firstvar}) is a sufficient condition for minimality of non-horizontal submanifolds. We say that a non-horizontal submanifold is minimal if $H+\sigma=0$ on $TM^\perp$, where $H$ is the mean curvature (Definition \ref{curvmedia}) and $\sigma$ is the mean torsion (Definition \ref{colchete}). In the case of hypersurfaces, the mean torsion is null and so the definition of minimality is the same as in \cite{Montefalcone2007, Montefalcone2012, Danielli2007, Hladky2013, Hurtado2010, Ritore2008} and also of minimal submanifolds of Riemannian geometry.

As a direct application of Theorem \ref{firstvar} we present the following example: if $M$ is minimal submanifold of $\mathbb R^{2n}$, then $N=M\times\mathbb R$ is minimal submanifold of $\mathbb{H}^n$.
Furthermore, an interesting application of this theorem is discussed in section \ref{H2}, where we find minimal non-horizontal surfaces in the $5$-dimensional Heisenberg group $\mathbb H^2$. Observe that in the $3$-dimensional Heisenberg group $\mathbb H^1$ the tangent horizontal curves to minimal surfaces are lines and hence this surfaces are ruled surfaces \cite{Pansu1982}. Now, for $\mathbb H^2$ the tangent horizontal curves to minimal surfaces can be more general. In section \ref{H2}, we will present two cases: the curves are lines and we obtain ruled surfaces; the curves are circles and we obtain a family of circles, which we will call tubular surfaces.

In the last section, we prove for non-horizontal hypersurfaces the following: $-H_{f_1}=\mbox{div}_{\mathbb G}\Big(\frac{\mbox{grad}_{D}\phi}{|\mbox{grad}_{D}\phi|}\Big)$, where $\phi:\mathbb G\rightarrow\mathbb R$ is smooth function, $\mbox{div}_{\mathbb G}$ is the divergence function on $\mathbb G$ and $\mbox{grad}_D$ is the horizontal gradient operator. With this formula we give a proof that the hyperboloid paraboloid is minimal.

\section{Stratified Lie groups}\label{slg}

A stratified Lie group $\mathbb{G}$ is an $n$-dimensional connected, simply connected nilpotent Lie group whose Lie algebra $\mathfrak{g}$ decomposes as $\mathfrak{g}=\mathfrak{g}^1\oplus\mathfrak{g}^2\oplus\cdots\oplus\mathfrak{g}^r$ and satisfies the condition $[\mathfrak{g}^1,\mathfrak{g}^j]=\mathfrak{g}^{j+1}, j=1,\ldots, r-1,\quad [\mathfrak{g}^j,\mathfrak g^r]=0, j=1,\ldots,r$. Write $d_i=\dim\mathfrak{g}^i$, choose a basis $e_1,\ldots,e_n$ of $\mathfrak{g}$ such that $e_{d_{j-1}+1},\ldots,e_{d_j}$ is a basis of $\mathfrak{g}^j$ and denote its dual basis by $e^1,\ldots,e^n$. Then, we define the \emph{degree} of $e_k$ as $\deg k=j$ if $d_{j-1}<k\leq d_j$.

We also define an covariant derivative $\overline\nabla$ on $T\mathbb G$ by
\begin{equation*}
\overline\nabla e_j=0
\end{equation*}
for every $j=1,\ldots,n$. Clearly, the curvature $\overline K$ of $\mathbb G$ is null. If $\overline T$ is the torsion of $\overline\nabla$, then
$
\overline T(e_i,e_j)=-[e_i,e_j]=-\sum_{k=1}^nc_{ij}^ke_k$
or
\begin{equation*}
\overline T=-\frac 1 2\sum_{i,j,k=1}^nc_{ij}^ke^i\wedge e^j\otimes e_k,
\end{equation*}
where $c_{ij}^k$ are the structure constants of $\mathfrak{g}$ associated with $e_1,\ldots,e_n$.

The stratification hypothesis on $\mathfrak{g}$ implies that $c_{ij}^k=0$ if $\deg{k}\neq \deg{i}+\deg{j}$. In particular, $c_{ij}^k=0$ for $k=1,\ldots,d_1$. We can then write
\begin{equation*}
\overline T=\sum_{k=d_1+1}^n\overline T^k\otimes e_k,\,\,\, \mbox{where}\,\,\, \overline T^k=-\frac 1 2\sum_{i,j=1}^nc^k_{ij}e^i\wedge e^j\,.
\end{equation*}

Let $D\subset T\mathbb G$  be defined by $D_x=\{X(x)\,|\, X\in\mathfrak{g}^1\}$. Consider a positive definite scalar product $\left\langle \,,\right\rangle$ on  $D$ such that $e_1,...,e_{d_1}$ are orthonormal. With this scalar product, $(\mathbb G,D,\left\langle ,\right\rangle)$ becomes a \emph{sub-Riemannian manifold}.

\begin{prop}\label{extend}
There is a canonical extension of $\left\langle ,\right\rangle$ on $D$ to a scalar product  on $T\mathbb{G}$.
\end{prop}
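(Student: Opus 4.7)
The plan is to extend $\langle,\rangle$ from $\mathfrak{g}^1$ to all of $\mathfrak{g}$ by induction on the layer index, and then spread it to $T\mathbb{G}$ by left translation (identifying $\mathfrak{g}$ with left-invariant vector fields). The first structural decision is to declare the layers $\mathfrak{g}^1,\mathfrak{g}^2,\ldots,\mathfrak{g}^r$ to be pairwise orthogonal, so the problem reduces to defining an inner product on each $\mathfrak{g}^j$ separately.

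For the inductive step, suppose inner products on $\mathfrak{g}^1,\ldots,\mathfrak{g}^j$ have been constructed. The stratification hypothesis says the Lie bracket restricts to a surjective bilinear map $\mathfrak{g}^1\times\mathfrak{g}^j\to\mathfrak{g}^{j+1}$, hence induces a surjective linear map
\begin{equation*}
\pi_j:\mathfrak{g}^1\otimes\mathfrak{g}^j\longrightarrow\mathfrak{g}^{j+1},\qquad X\otimes Y\mapsto [X,Y].
\end{equation*}
Equip $\mathfrak{g}^1\otimes\mathfrak{g}^j$ with its canonical tensor product inner product, defined by $\langle X_1\otimes Y_1,X_2\otimes Y_2\rangle=\langle X_1,X_2\rangle\,\langle Y_1,Y_2\rangle$ and extended bilinearly. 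Then $\mathfrak{g}^{j+1}$ inherits the quotient inner product from the isomorphism $\mathfrak{g}^{j+1}\cong(\mathfrak{g}^1\otimes\mathfrak{g}^j)/\ker\pi_j$; equivalently, identify $\mathfrak{g}^{j+1}$ with $(\ker\pi_j)^{\perp}\subset\mathfrak{g}^1\otimes\mathfrak{g}^j$ and pull back the inner product along the restriction of $\pi_j$.

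The key point to verify is that this construction is \emph{canonical}, i.e.\ independent of any chosen basis: both the tensor product inner product and the orthogonal complement construction are defined intrinsically from $(\mathfrak{g}^1,\langle,\rangle)$, $(\mathfrak{g}^j,\langle,\rangle)$, and the algebraic bracket, so the resulting inner product on $\mathfrak{g}^{j+1}$ depends only on the data of the previous step and the Lie algebra structure. An easy check shows that, should one start from an orthonormal basis $e_1,\ldots,e_{d_1}$ of $\mathfrak{g}^1$ and proceed as above, the resulting basis produced on each $\mathfrak{g}^j$ by representatives of a standard basis of $(\ker\pi_{j-1})^{\perp}$ is orthonormal, matching the normalisation used later for $e_1,\ldots,e_n$.

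Finally, since $\mathbb{G}$ acts on itself by left translation and the grading $\mathfrak{g}=\mathfrak{g}^1\oplus\cdots\oplus\mathfrak{g}^r$ is a decomposition by left-invariant vector fields, transporting $\langle,\rangle$ from $\mathfrak{g}\cong T_e\mathbb{G}$ to each $T_x\mathbb{G}$ via $(L_x)_*$ produces a smooth, left-invariant Riemannian metric on $\mathbb{G}$ that restricts on $D$ to the original scalar product. The main obstacle is really just bookkeeping: once orthogonality of the layers is imposed and the tensor-quotient construction is set up, canonicity and compatibility with the base case are immediate from the universal property of the tensor product.
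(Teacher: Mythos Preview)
Your proof is correct and follows essentially the same route as the paper: the inductive step via the bracket map $\mathfrak{g}^1\otimes\mathfrak{g}^j\to\mathfrak{g}^{j+1}$, the tensor-product inner product, and the identification of $\mathfrak{g}^{j+1}$ with $(\ker\pi_j)^\perp$ are exactly what the paper does. You add some useful elaboration (explicit orthogonality of the layers, the left-translation step to pass from $\mathfrak{g}$ to $T\mathbb{G}$, and a remark on basis-independence), but the core construction is identical.
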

\begin{proof}
The proof goes by induction. Suppose that we have extended $\left\langle , \right\rangle$ to a scalar product of $\mathfrak{g}^k$. Then we have a scalar product on $\mathfrak{g}^1\oplus \cdots \oplus\mathfrak{g}^k$, and we can consider the map
$B:\mathfrak{g}^1\otimes\mathfrak{g}^k\rightarrow \mathfrak{g}^{k+1}$ defined by $B(X\otimes Y)=[X,Y]$.
The scalar products on both $\mathfrak{g}^1$ and $\mathfrak{g}^k$ induce a scalar product on $\mathfrak{g}^1\otimes\mathfrak{g}^k$. Thus, $B:(\ker B)^\perp\rightarrow\mathfrak{g}^{k+1}$ is an isomorphism, by which we can transport the scalar product of $(\ker B)^\perp$ to $\mathfrak{g}^{k+1}$.
\end{proof}
We shall use the same symbol $\langle,\rangle$ for the extended scalar product. Therefore, from the definition of $\overline\nabla$ and Proposition \ref{extend}, we get for every $X,Y\in T\mathbb G$ that
\begin{equation*}
\overline\nabla\left\langle X,Y\right\rangle=\left\langle \overline\nabla X,Y\right\rangle+\left\langle X,\overline\nabla Y\right\rangle.
\end{equation*}
\begin{prop}
Suppose that $f:\mathfrak{g}\rightarrow\mathfrak{g}$ is a graded automorphism of Lie algebras such that
$\left.f\right|_{\mathfrak{g}^1}:\mathfrak{g}^1\rightarrow\mathfrak{g}^1$
is an isometry. Then $f$ is an isometry of $\mathfrak{g}$.
\end{prop}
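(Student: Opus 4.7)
The plan is to proceed by induction on $k$, mirroring the inductive construction of the scalar product in Proposition \ref{extend}. The base case $k=1$ is exactly the hypothesis. For the inductive step, I assume that $f|_{\mathfrak{g}^j}$ is an isometry for every $j\leq k$ and I aim to show $f|_{\mathfrak{g}^{k+1}}$ is an isometry.

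The key input is the commutation relation between $f$ and the bracket map $B:\mathfrak{g}^1\otimes\mathfrak{g}^k\to\mathfrak{g}^{k+1}$, $B(X\otimes Y)=[X,Y]$. Since $f$ is a Lie algebra automorphism preserving the grading, one has
\begin{equation*}
f\circ B=B\circ(f\otimes f),
\end{equation*}
where $f\otimes f$ denotes the restriction of $f\otimes f$ to $\mathfrak{g}^1\otimes\mathfrak{g}^k$. By the inductive hypothesis, $f|_{\mathfrak{g}^1}$ and $f|_{\mathfrak{g}^k}$ are isometries, so the tensor product $f\otimes f$ is an isometry of the induced scalar product on $\mathfrak{g}^1\otimes\mathfrak{g}^k$.

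From the commutation relation and the fact that $f$ is an automorphism (so in particular a bijection sending $\mathfrak{g}^{k+1}$ to itself), the map $f\otimes f$ preserves $\ker B$. Because $f\otimes f$ is an isometry on $\mathfrak{g}^1\otimes\mathfrak{g}^k$, it also preserves the orthogonal complement $(\ker B)^\perp$. Recall that by construction the scalar product on $\mathfrak{g}^{k+1}$ was defined precisely so that the restriction $B:(\ker B)^\perp\to\mathfrak{g}^{k+1}$ is an isometric isomorphism.

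To finish, given $Z\in\mathfrak{g}^{k+1}$, write $Z=B(W)$ with a unique $W\in(\ker B)^\perp$; then $\|Z\|=\|W\|$. Using the commutation relation,
\begin{equation*}
f(Z)=f(B(W))=B\bigl((f\otimes f)(W)\bigr),
\end{equation*}
and since $(f\otimes f)(W)\in(\ker B)^\perp$ with $\|(f\otimes f)(W)\|=\|W\|$, the isometry property of $B$ on $(\ker B)^\perp$ gives $\|f(Z)\|=\|W\|=\|Z\|$. This completes the induction and shows $f$ is an isometry on each stratum, hence on $\mathfrak{g}$ since the stratification is orthogonal in the extended scalar product (which follows from applying the construction with $\mathfrak{g}^j$ and $\mathfrak{g}^k$ coming from distinct summands). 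The only subtle step is verifying that $f\otimes f$ maps $\ker B$ to itself, but this is immediate from $f\circ B=B\circ(f\otimes f)$ together with the injectivity of $f$, so no real obstacle arises.
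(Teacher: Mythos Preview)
Your proof is correct and follows essentially the same approach as the paper: induction along the stratification, the commutation relation $f\circ B=B\circ(f\otimes f)$, and the fact that $f\otimes f$ is an isometry preserving $(\ker B)^\perp$, so that the transported scalar product on $\mathfrak{g}^{k+1}$ is preserved. The only cosmetic differences are that the paper computes $\langle fZ,fZ'\rangle$ directly while you check norm preservation (equivalent by polarization for linear maps), and your parenthetical about orthogonality of the strata is slightly imprecise---the orthogonality is simply part of the definition of the extended scalar product on the direct sum, not something derived from the construction.
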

\begin{proof} The proof goes by induction. We will suppose that we have shown that  $f:\mathfrak{g}^1\oplus\cdots\oplus\mathfrak{g}^k\rightarrow\mathfrak{g}^1\oplus\cdots\oplus\mathfrak{g}^k$ is an isometry. Consider $\fof:\mathfrak{g}^1\otimes\mathfrak{g}^k\rightarrow\mathfrak{g}^1\otimes\mathfrak{g}^k$ defined by $\fof(x\otimes Y)=f(x)\otimes f(Y)$. Then, $\fof$ is an isometry, i.e.,
\begin{align*}
\langle\fof(x\otimes Y),f\otimes f(x'\otimes Y')\rangle=&
\langle fx,fx'\rangle\langle  fY, fY'\rangle=\langle x,x'\rangle\langle  Y, Y'\rangle\\
=&\langle x\otimes Y,x'\otimes Y'\rangle.
\end{align*}
As before, let $B:\mathfrak{g}^1\otimes\mathfrak{g}^k\rightarrow\mathfrak{g}^{k+1}$ be defined by $B(x\otimes Y)=[x,Y]$. Then
\begin{equation*}
B(\fof(x\otimes Y))=B(f(x)\otimes f(Y))=[fx,fY]=f[x,Y]=f(B(x\otimes Y)).
\end{equation*}
Therefore, $\fof(\ker B)=\ker B$ and $\fof((\ker B)^\perp)=(\ker B)^\perp$.

Take $Z,Z'\in\mathfrak{g}^{k+1}$. Then,
$Z=B(u),\, Z'=B(u')$,
where $u,u'\in (\ker B)^\perp$. Note that $\fof (u),f\!\otimes\!f(u')\in (\ker B)^\perp$  and $B$, restricted to this subspace, is an isometry. So, we have
\begin{align*}
\langle fZ,fZ'\rangle=&\langle B(\fof (u)),B(\fof(u'))=\langle \fof (u),f\!\otimes\!f(u')\rangle\rangle=\langle u,u'\rangle\\
=&\langle B(u),B(u')\rangle=\langle Z,Z'\rangle\,.
\end{align*}
\end{proof}

\section{Non-horizontal submanifolds}\label{nhs}

Consider a stratified Lie group $\mathbb G$. A submanifold $M\subset\mathbb G$ of codimension $p$ is \emph{non-horizontal} if $TM+D=T\mathbb G$. We then have that $TM\cap D$ is a subvector bundle with $\dim TM\cap D=d_1-p$.

Let $U$ be an open neighborhood of $\mathbb G$ such that $M\cap U\neq \emptyset$. An adapted basis to $M$ on $U$ is a basis $f_1,\ldots,f_n$ on $U$ such that
\begin{equation*}
f_j=\sum_{k=1}^{d_1}a_j^ke_k,\,  j=1,\cdots,d_1,
\end{equation*}
is an orthonormal basis of $D$  with
$f_1,\ldots,f_p$  orthogonal to $TM\cap D$ on $D$ and $f_{p+1},\ldots,f_{d_1}$ is a basis of $TM\cap D$. It follows that the matrix $(a_j^k)_{1\leq j,k\leq d_1}$ is orthogonal.
We complete $f_{p+1},\ldots,f_{d_1}$ to a basis $f_{p+1},\ldots, f_n$ of $U$ by
\begin{equation}\label{fjj}
f_j=e_j-\sum_{\alpha=1}^pA_j^\alpha f_{\alpha},\, j=d_1+1,\ldots,n.
\end{equation}
such that $f_j$ is tangent to $M$ on $U$.

Now, let $i,j=1,\ldots,n$ and let $\overline\omega_j^i$ be the connection $1$-forms defined in terms of the basis $f_i$. Then
\begin{equation}\label{conex1}
\overline\nabla f_j=\sum_{i=1}^{d_1}\overline\omega _j^i\otimes f_i,
\end{equation}
where
\begin{align*}
\overline\omega _j^i&=\sum_{k=1}^{d_1}a^k_i\dif a_j^k, i,j=1,\ldots,d_1,\\
\overline\omega_j^\alpha&=-\dif A_j^{\alpha}-\sum_{\beta=1}^pA_j^{\beta}\overline\omega _\beta^\alpha, \alpha=1,\cdots,p, j=d_1+1,\ldots,n,\\
\overline\omega_j^i&=-\sum_{\beta=1}^pA_j^{\beta}\overline\omega _\beta^i,i=p+1, \ldots,d_1, j=d_1+1,\ldots,n.
\end{align*}

\begin{prop}\emph{({\bf{Cartan structural equations}})}\label{stru}
The $1$-forms $f^i$ and $\overline\omega_j^i$ satisfy
\begin{align*}
\dif f^\alpha=&-\sum_{j=1}^{n}\overline\omega^\alpha_j\wedge f^j+\tilde T^\alpha, \ \alpha=1,\ldots,p,\\
\dif f^{i}=&-\sum_{j=1}^n\overline\omega^i_j\wedge f^j, \ i=p+1,\ldots,d_1,\\
\dif f^{j}=&\,\,\overline T^j, \ j=d_1+1,\ldots,n,\\
\dif\overline\omega^k_i=&-\sum_{j=1}^{d_1}\overline\omega^k_j\wedge\overline\omega^j_i,\,
k=1,\ldots,d_1,  \ i=1,\ldots,n,
\end{align*}
where
$\widetilde T^\alpha =\sum_{k=d_1+1}^nA_k^\alpha\overline T^k , \alpha=1,\ldots,p$.

\end{prop}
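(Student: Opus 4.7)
The plan is to derive the four identities from the first and second Cartan structural equations for the connection $\overline\nabla$, exploiting the fact that the curvature $\overline K$ vanishes and that the torsion $\overline T$ is given explicitly in the $e$-basis. Recall that for any linear connection with connection forms $\omega^i_j$ defined by $\nabla f_j=\sum_i \omega^i_j\otimes f_i$, the first and second Cartan identities read
\[
\dif f^i=-\sum_j \omega^i_j\wedge f^j+T^i,\qquad \dif\omega^i_j=-\sum_l \omega^i_l\wedge\omega^l_j+K^i_j,
\]
where $T=\sum_i T^i\otimes f_i$ is the torsion and $K^i_j$ are the curvature $2$-forms in the frame $\{f_i\}$. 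These identities are a purely formal consequence of the definitions of $T, K$ and of $\dif\alpha(X,Y)=X\alpha(Y)-Y\alpha(X)-\alpha([X,Y])$, so they apply to $\overline\nabla$ irrespective of any metric compatibility.

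The first three equations reduce to computing the $f$-frame components of $\overline T$. Inverting \eqref{fjj} gives $e_k=f_k+\sum_{\alpha=1}^p A_k^\alpha f_\alpha$ for $k>d_1$; substituting into $\overline T=\sum_{k=d_1+1}^n \overline T^k\otimes e_k$ yields
\[
\overline T=\sum_{\alpha=1}^p \tilde T^\alpha\otimes f_\alpha+\sum_{k=d_1+1}^n \overline T^k\otimes f_k,
\]
so that the $f$-frame components of $\overline T$ are $\tilde T^\alpha$ for $\alpha\le p$, zero for $p<i\le d_1$, and $\overline T^k$ for $k>d_1$. Plugging into the first Cartan identity delivers the first two equations at once. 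For the third, one additionally observes that $\overline\omega^j_k=0$ whenever $j>d_1$, which kills the connection term. This vanishing comes from $\overline\nabla f_k\in D$ for every $k$: for $k\le d_1$ it is immediate from $f_k=\sum_l a_k^l e_l$ together with $\overline\nabla e_l=0$, while for $k>d_1$ it follows from $\overline\nabla f_k=-\sum_\alpha \dif A_k^\alpha\otimes f_\alpha-\sum_\alpha A_k^\alpha\overline\nabla f_\alpha$ and induction on the previous case.

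The fourth equation is the second Cartan identity combined with $\overline K=0$, yielding $\dif\overline\omega^k_i=-\sum_{l=1}^n \overline\omega^k_l\wedge\overline\omega^l_i$; the same vanishing just noted allows us to restrict the sum to $l=1,\ldots,d_1$. The only conceptual step throughout is the identification of which connection entries are zero; once that is pinned down, the proof is routine bookkeeping within the standard Cartan framework, and no substantial obstacle is expected.
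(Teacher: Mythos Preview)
Your proposal is correct and is essentially the natural way to flesh out the paper's proof, which consists only of the sentence ``These identities follow directly from the definition of the adapted basis to $M$.'' Your use of the general first and second Cartan structural equations for $\overline\nabla$, together with the identification of the $f$-frame torsion components via $e_k=f_k+\sum_\alpha A_k^\alpha f_\alpha$ and the observation that $\overline\omega^j_k=0$ for $j>d_1$ (equivalently, equation~\eqref{conex1} in the paper), is exactly what ``follows directly'' means here.
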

\begin{proof}
These identities follow directly from the definition of the adapted basis to $M$.
\end{proof}


\subsection{The second fundamental form and the Weingarten operator}\label{sff}
We denote by $TM^\perp$ the subvector bundle of $T\mathbb G$ on $M$ generated by $f_1, \ldots,\!f_p$. Then $TM^\perp$  is orthogonal to $TM\cap D$ on $D$ and  $T\mathbb G=TM^\perp\oplus TM$ (in general non-orthogonal!). If $v\in T_p\mathbb G$, $p\in M$, we  write $v=v^\top + v^\perp$, where $ v^\top\in TM$ and $ v^\perp\in TM^\perp$. Therefore
\begin{equation*}
\overline\nabla_{X}Y=(\overline\nabla_{X}Y)^\top+(\overline\nabla_{X}Y)^\perp\,\,\, \mbox{and}\,\,\,\overline T(X,Y)=\overline T^\top(X,Y)+\overline T^\perp(X,Y)
\end{equation*}
for every $X,Y\in TM$.

We define the covariant derivative on $TM$ by
$
\nabla_XY=(\overline\nabla_XY)^\top,\, \forall\, X,Y\in TM
$.
\begin{dfn}\label{segforma}
Let $S:TM\times TM\rightarrow TM^\perp$ be the bilinear form defined by
$$S(X, Y)=(\overline\nabla_XY)^\perp.$$ We say that $S$
is the \emph{second fundamental form} associated to $M$.
\end{dfn}
Moreover, given $\xi\in TM^\perp$ and $X\in TM$ we write $
\overline\nabla_X\xi=-A_\xi(X)+\nabla^\perp_X\xi,
$
where
\begin{equation*}
A_\xi(X)=-(\overline\nabla_X\xi)^\top\in TM\,\,\, \mbox{and}\,\,\,\nabla^\perp_X\xi=(\overline\nabla_X\xi)^\perp\in  TM^\perp.
\end{equation*}
\begin{dfn} Let
$A:TM\times TM^ \perp \rightarrow TM$ be the bilinear form defined by $$A_\xi(X)=-(\overline\nabla_X\xi)^\top.$$ We say that $A$ is the Weingarten operator associated to $M$.
\end{dfn}
Observe that $A_\xi:TM\rightarrow TM\cap D$, since $D$ is invariant under $\overline\nabla$. Also, we have that $\nabla^\perp:\xi\in TM^\perp\rightarrow \nabla^\perp\xi\in (TM)^\ast\otimes TM^\perp$ is a linear connection on $TM^\perp$, where $(TM)^\ast$ is the dual space of $TM$.

\begin{rmk}
The second fundamental form is generally not symmetric, i.e., for every $X,Y\in TM$,
$
S(X,Y)-S(Y,X)=(\overline{\nabla}_XY-\overline{\nabla}_YX-[X,Y])^\perp=\overline{T}^\perp(X,Y).
$
\end{rmk}
Let us introduce
$
P:TM\times TM^\perp\rightarrow\mathbb{R}
$
 as $P(X,\xi)=\langle X,\xi\rangle$ and
\begin{align*}
\widetilde\nabla_X P(Y,\xi)=&X(P(Y,\xi))-P(\nabla_XY,\xi)-P(Y,\nabla^\perp_X\xi)\\
=&X\langle Y,\xi \rangle-\left\langle\nabla_XY,\xi\right\rangle-\langle Y,\nabla^\perp_X\xi\rangle.
\end{align*}
\begin{prop}For every $X,Y\in TM$ and $\xi\in TM^\perp$, we have that
\begin{equation*}
\left\langle S(X,Y),\xi\right\rangle=\widetilde\nabla_X P(Y,\xi)+\left\langle A_\xi(X),Y\right\rangle.
\end{equation*}
In particular, if $Y\in TM\cap D$, then $\left\langle S(X,Y),\xi\right\rangle=\left\langle A_\xi(X),Y\right\rangle$.
\end{prop}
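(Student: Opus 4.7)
The plan is to start from the metric compatibility of $\overline\nabla$, which was recorded in the excerpt as $X\langle Y,\xi\rangle=\langle\overline\nabla_X Y,\xi\rangle+\langle Y,\overline\nabla_X\xi\rangle$. Into this I would substitute the two tangent/normal decompositions already introduced, namely $\overline\nabla_X Y=\nabla_X Y+S(X,Y)$ and $\overline\nabla_X\xi=-A_\xi(X)+\nabla^\perp_X\xi$. Expanding and rearranging gives
\begin{equation*}
\langle S(X,Y),\xi\rangle=X\langle Y,\xi\rangle-\langle\nabla_X Y,\xi\rangle-\langle Y,\nabla^\perp_X\xi\rangle+\langle A_\xi(X),Y\rangle,
\end{equation*}
and the first three terms on the right are exactly the definition of $\widetilde\nabla_X P(Y,\xi)$. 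This gives the main identity.

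For the ``in particular'' statement, I have to show that $\widetilde\nabla_X P(Y,\xi)=0$ when $Y\in TM\cap D$. Since $\widetilde\nabla P$ is a tensor, I may extend $Y$ to a local section of $TM\cap D$. Then $P(Y,\xi)=\langle Y,\xi\rangle$ vanishes identically on $M$, because $TM^\perp$ was defined to be the orthogonal complement of $TM\cap D$ inside $D$, so $Y\perp\xi$ in $D$; this kills the $X\langle Y,\xi\rangle$ term. The same orthogonality, applied to $Y\in TM\cap D\subset D$ and $\nabla^\perp_X\xi\in TM^\perp\subset D$, kills $\langle Y,\nabla^\perp_X\xi\rangle$.

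The only delicate piece is $\langle\nabla_X Y,\xi\rangle$, which is where I expect the main (minor) obstacle. The point I would emphasize is that $\overline\nabla$ preserves $D$: since $\overline\nabla e_k=0$ for $k=1,\dots,d_1$, every section of $D$ has its $\overline\nabla$-derivative still in $D$. Hence $\overline\nabla_X Y\in D$ whenever $Y\in\Gamma(TM\cap D)$. Splitting $\overline\nabla_X Y=\nabla_X Y+S(X,Y)$ with $S(X,Y)\in TM^\perp\subset D$ then forces $\nabla_X Y=\overline\nabla_X Y-S(X,Y)\in D$, and being also in $TM$ it lies in $TM\cap D$, so $\langle\nabla_X Y,\xi\rangle=0$ by the same orthogonality in $D$. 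Combining these three vanishings yields $\widetilde\nabla_X P(Y,\xi)=0$ and hence $\langle S(X,Y),\xi\rangle=\langle A_\xi(X),Y\rangle$, completing the proof.
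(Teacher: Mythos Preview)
Your proof is correct and follows exactly the approach indicated by the paper, which merely says the identity follows from Definition~\ref{segforma} and the compatibility of $\overline\nabla$ with $\langle\,,\,\rangle$, and that ``clearly'' $\widetilde\nabla_X P(Y,\xi)=0$ for $Y\in TM\cap D$. You have simply supplied the details the paper omits, and your treatment of the third term $\langle\nabla_X Y,\xi\rangle$ via the $\overline\nabla$-invariance of $D$ is precisely the right justification.
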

\begin{proof}
The proof follows by Definition \ref{segforma} and by compatibility of $\overline\nabla$ with the extended scalar product $\left\langle,\right\rangle$. Clearly, $\widetilde\nabla_X P(Y,\xi)=0, \forall\,\, Y\in TM\cap D$.
\end{proof}
Let $K^\perp$ be the curvature of $\nabla^\perp$, i.e.,
$K^\perp(X,Y)\xi=\!\nabla^\perp_X\nabla^\perp_Y\xi-\nabla^\perp_Y\nabla_X\xi-\nabla^\perp_{[X,Y]}\xi$
and let us define
\begin{equation*}
\widetilde\nabla_X S(Y,Z)=\nabla^\perp_X(S(Y,Z))-S(\nabla_XY,Z)-S(Y,\nabla_XZ).
\end{equation*}
We then obtain a sub-Riemannian version of the fundamental equations.

\begin{thm}The curvature $K$ of $\nabla$, the curvature $K^\perp$ of $\nabla^\perp$, the second fundamental form $S$ and the Weingarten operator $A$ satisfy
\begin{enumerate}
\item  (Gauss equation) $K(X,Y)Z=A_{S(Y,Z)}(X)-A_{S(X,Z)}(Y)$;
\item (Codazzi equation) $\widetilde\nabla_YS(X,Z)-\widetilde\nabla_XS(Y,Z)-S(T(X,Y),Z)=0$;
\item (Ricci equation) $K^\perp(X,Y)\xi=S(X,A_\xi Y)-S(Y,A_\xi X)$,
\end{enumerate}
where $T$ is the torsion of $\nabla$.
\end{thm}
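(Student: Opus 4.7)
The plan is to exploit the vanishing of the ambient curvature, $\overline K \equiv 0$, and extract the tangent and normal components of the identity $\overline\nabla_X\overline\nabla_Y W-\overline\nabla_Y\overline\nabla_X W-\overline\nabla_{[X,Y]}W=0$ for $W$ either tangent to $M$ (giving Gauss and Codazzi simultaneously) or normal to $M$ (giving Ricci). This mirrors the classical proof of the fundamental equations in Riemannian submanifold theory; the only conceptual twist is that the splitting $T\mathbb G=TM\oplus TM^\perp$ is non-orthogonal and that the torsion $T$ of $\nabla$ plays a visible role (inherited from the nontrivial torsion of $\overline\nabla$).

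First I would fix $X,Y,Z\in TM$ and expand using the two decomposition formulas
\begin{equation*}
\overline\nabla_UV=\nabla_UV+S(U,V),\qquad \overline\nabla_U\xi=-A_\xi U+\nabla^\perp_U\xi,
\end{equation*}
valid for $U,V\in TM$ and $\xi\in TM^\perp$. Applying $\overline\nabla_X$ to $\overline\nabla_YZ=\nabla_YZ+S(Y,Z)$ and using these formulas splits the result into a $TM$-part $\nabla_X\nabla_YZ-A_{S(Y,Z)}(X)$ and a $TM^\perp$-part $S(X,\nabla_YZ)+\nabla^\perp_XS(Y,Z)$. After subtracting the symmetric expression and the term $\overline\nabla_{[X,Y]}Z=\nabla_{[X,Y]}Z+S([X,Y],Z)$, the tangent component of $\overline K(X,Y)Z=0$ is exactly the Gauss equation. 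For Codazzi, I would collect the normal component and rewrite $[X,Y]=\nabla_XY-\nabla_YX-T(X,Y)$ so that $S([X,Y],Z)=S(\nabla_XY,Z)-S(\nabla_YX,Z)-S(T(X,Y),Z)$; the remaining terms group precisely into $\widetilde\nabla_YS(X,Z)-\widetilde\nabla_XS(Y,Z)$ by the definition of $\widetilde\nabla S$, leaving the announced identity.

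For Ricci, I would apply the same strategy to $\overline K(X,Y)\xi=0$ with $\xi\in TM^\perp$. Expanding $\overline\nabla_X\overline\nabla_Y\xi$ produces tangent contributions $-\nabla_XA_\xi Y-A_{\nabla^\perp_Y\xi}X$ and normal contributions $-S(X,A_\xi Y)+\nabla^\perp_X\nabla^\perp_Y\xi$. Taking the normal part of the full curvature identity and using that $\overline\nabla_{[X,Y]}\xi=-A_\xi[X,Y]+\nabla^\perp_{[X,Y]}\xi$ contributes only $\nabla^\perp_{[X,Y]}\xi$ to the normal component, one obtains the Ricci formula $K^\perp(X,Y)\xi=S(X,A_\xi Y)-S(Y,A_\xi X)$.

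The main potential obstacle is bookkeeping: the non-orthogonality of the decomposition $T\mathbb G=TM\oplus TM^\perp$ means one must be disciplined about what ``$\top$'' and ``$\perp$'' mean (they are projections along the direct sum, not orthogonal projections), and the hidden torsion of both $\overline\nabla$ and of $\nabla$ must be tracked carefully so that the ``extra'' torsion term $S(T(X,Y),Z)$ appears in Codazzi with the right sign. Once one commits to the convention $T(X,Y)=\nabla_XY-\nabla_YX-[X,Y]$, substituting the Lie bracket wherever it appears turns the three component equations into the three stated identities mechanically, and no further input is needed beyond $\overline K=0$.
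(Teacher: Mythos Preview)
Your argument is correct and is exactly the standard derivation of the Gauss--Codazzi--Ricci equations, adapted to the present setting where $\overline K=0$ and the projections $(\cdot)^\top$, $(\cdot)^\perp$ are along the direct-sum splitting $T\mathbb G=TM\oplus TM^\perp$. The paper in fact does not supply a proof for this theorem: it is stated as an immediate consequence of the definitions and of $\overline K\equiv 0$, and the text moves directly to the coordinate expressions. Your write-up is therefore more detailed than what the paper contains; the only thing worth noting is that your Codazzi identity $\widetilde\nabla_XS(Y,Z)-\widetilde\nabla_YS(X,Z)+S(T(X,Y),Z)=0$ is the paper's stated identity multiplied by $-1$, so there is no discrepancy.
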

In coordinates, we have the following. Taking the tangential component and the horizontal orthogonal component of (\ref{conex1}) we obtain, respectively
\begin{equation}\label{A}
S(X,Y)=\sum_{\alpha=1}^p\sum_{j=p+1}^nf^j(Y)\overline\omega^\alpha_j(X)f_\alpha\,\,\, \mbox{and}\,\,\,  A_{f_\alpha}(X)=-\sum_{i=p+1}^{d_1}\overline\omega_\alpha^i(X)f_i.
\end{equation}
Furthermore, $\nabla^\perp_{X} f_\alpha=\sum_{\beta=1}^p\overline\omega_\alpha^\beta(X)f_\beta$.

We end this section with two definitions, the first of which being well known.
\begin{dfn}\label{curvmedia}
The \emph{mean curvature} of $M$ is the tensor $H:TM^\perp\rightarrow\mathbb R$ defined by
$H_\xi=-\mbox{\emph{trace} } A_\xi$.
\end{dfn}
\begin{dfn}\label{colchete}
 The \emph{mean torsion} of $M$ is the tensor $\sigma:TM^\perp\rightarrow\mathbb R$ defined by
\begin{equation*}
\sigma_{\xi}=\sum_{j=d_1+1}^{n}f^j(\overline T( \xi,f_j)).
\end{equation*}
\end{dfn}

\section{The $\mu$-measure of non-horizontal submanifolds}\label{Hmnhs}

In this section, using the adapted basis to non-horizontal submanifold $M$, we will give a new expression for the density of the measure $\mu$ on $M$. When $\mathbb G$ is the Heisenberg group, this measure is a constant multiple of the spherical Hausdorff measure. In the general case the density of the spherical Hausdorff measure on $M$ is a multiple of $\mu$ by a function. This function depends on the position of $TM$ in relation to the distribution $D$.

Following \cite{Magnani2008}, let $\tau_{M}(x)$ be a unit $(n-p)$-tangent vector of $M$ at $x$ with respect to the extended metric $h:=\langle,\rangle$ at $\mathbb G$. We define $\tau^{d}_{M}(x)$ as the part of $\tau_{M}(x)$ with maximum degree $d=Q-p$, where $Q$ is the Hausdorff dimension of $\mathbb G$.
Let  $v_1,\ldots,v_p$ be a basis of sections of the orthogonal space $TM^{\perp_h}$ which is orthonormal with respect to $h$, i.e., $\langle v_i,TM\rangle=0$, and  $v=v_1\wedge v_2\wedge\cdots\wedge v_p$ is the unit $p$-form of the orthogonal space $TM^{\perp_h}$. We consider the measure $\mu$ on $M$ with density
\begin{equation}
\dif \mu=|\tau^d_{M}|\mbox{vol}_{h}\llcorner M,
\end{equation}
where
$\mbox{vol}_{h}\llcorner M:=(v \ \lrcorner \ \mbox{dvol}_{h})|_{M}$,
the symbol $\lrcorner$ denotes the contraction (or interior product) of a differential form with a vector,
$
\mbox{dvol}_h=e^1\wedge\cdots\wedge e^n=f^1\wedge\cdots\wedge f^n
$
is the density of the spherical Hausdorff measure on $\mathbb G$ as a sub-Riemannian metric space,  and $\left|\cdot\right|$ is the norm induced by the fixed Riemannian metric $h$.
Then
\begin{equation*}
\dif \mu=|\tau^{d}_{M}|v \ \lrcorner \ \mbox{dvol}_{h}=|\tau^{d}_{M}|v \ \lrcorner \ f^{1}\wedge\cdots\wedge f^{n}.
\end{equation*}
\begin{thm}\label{dS}
\begin{equation}
\dif \mu=f^{p+1}\wedge\cdots\wedge f^n \ .\label{volume}
\end{equation}
\end{thm}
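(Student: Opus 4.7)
My approach is to evaluate the two factors in the defining expression
$$d\mu = |\tau^d_M|_h \cdot (v \lrcorner \mbox{dvol}_h)|_M$$
separately and observe that the two Gram-determinant factors cancel. Throughout I write $G$ for the Gram matrix $(h(f_i,f_j))_{i,j=p+1}^{n}$ of the tangent frame.

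First I would read off the tangent multivector from the adapted frame. Since $f_{p+1},\ldots,f_n$ span $TM$, the unit $(n-p)$-tangent vector is $\tau_M = (f_{p+1}\wedge\cdots\wedge f_n)/|f_{p+1}\wedge\cdots\wedge f_n|_h$. To single out the top-degree part $\tau^d_M$ I substitute the defining relation $f_j = e_j - \sum_{\alpha} A_j^\alpha f_\alpha$ for $j>d_1$ and expand the wedge product. Each factor splits into a piece of degree $\deg j\ge 2$ (the $e_j$ summand) and a piece of degree $1$ (the $f_\alpha$ summands), so the unique maximal-degree term comes from keeping $e_j$ in every factor, namely
$$f_{p+1}\wedge\cdots\wedge f_{d_1}\wedge e_{d_1+1}\wedge\cdots\wedge e_n,$$
whose total degree is $(d_1-p)+\sum_{j>d_1}\deg j = Q-p = d$, as required. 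Using Proposition \ref{extend} (different strata are $h$-orthogonal) together with the orthonormality of $f_{p+1},\ldots,f_{d_1}$ in $\mathfrak{g}^1$ and the orthonormality of $e_{d_1+1},\ldots,e_n$ inside $\mathfrak{g}^2\oplus\cdots\oplus\mathfrak{g}^r$, this multivector has unit $h$-norm. Normalizing $\tau_M$ therefore gives
$$|\tau^d_M|_h = \frac{1}{|f_{p+1}\wedge\cdots\wedge f_n|_h} = \frac{1}{\sqrt{\det G}}.$$

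Second I would identify $(v\lrcorner \mbox{dvol}_h)|_M$ with the Riemannian volume form $dV_M^h$ on $M$ induced by $h$: contracting the $h$-orthonormal normal multivector $v=v_1\wedge\cdots\wedge v_p$ with an orthonormal coframe dual to $(v_1,\ldots,v_p, w_1,\ldots,w_{n-p})$, where $w_\bullet$ is an $h$-orthonormal tangent frame, yields the tangent orthonormal coframe, whose restriction is $dV_M^h$. Writing $dV_M^h$ in the frame $\{f_{p+1},\ldots,f_n\}$ via the standard change-of-basis formula for volumes gives
$$(v\lrcorner \mbox{dvol}_h)|_M = \sqrt{\det G}\; f^{p+1}\wedge\cdots\wedge f^n|_M.$$

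Finally the two factors combine:
$$d\mu = \frac{1}{\sqrt{\det G}}\cdot\sqrt{\det G}\; f^{p+1}\wedge\cdots\wedge f^n = f^{p+1}\wedge\cdots\wedge f^n.$$
The only delicate step is the isolation of the top-degree summand of $\tau_M$ and the verification that it has unit norm; once this is in place, the orthogonal/non-orthogonal decomposition of $T\mathbb{G}|_M = TM \oplus TM^{\perp_h}$ versus $T\mathbb{G}|_M = TM \oplus TM^\perp$ is absorbed entirely into the cancellation of $\sqrt{\det G}$.
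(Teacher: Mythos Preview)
Your proof is correct and shares the paper's overall skeleton (compute $|\tau^d_M|$ and $v\lrcorner\mbox{dvol}_h$ separately, then observe a cancellation), but your treatment of the second factor is genuinely more streamlined. The paper builds the $h$-orthogonal normals explicitly as $w_\alpha=f_\alpha+\sum_{j>d_1}x^j_\alpha f_j$, computes $v\lrcorner\mbox{dvol}_h=(\det\mathbf W)^{-1/2}\,f^{p+1}\wedge\cdots\wedge f^n$ with $\mathbf W=\mathbf I_p-\mathbf A^T\mathbf B^{-1}\mathbf A$, obtains $|\tau^d_M|=(\det\mathbf B)^{-1/2}$ with $\mathbf B=\mathbf I_{n-d_1}+\mathbf A\mathbf A^T$, and then invokes the matrix determinant lemma to get $\det\mathbf B\cdot\det\mathbf W=1$. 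You bypass all of this by recognizing at once that $(v\lrcorner\mbox{dvol}_h)|_M$ is the induced Riemannian volume $dV^h_M$, which in the frame $f_{p+1},\ldots,f_n$ is $\sqrt{\det G}\,f^{p+1}\wedge\cdots\wedge f^n$; since your $G$ is block-diagonal with identity block on $\{p+1,\ldots,d_1\}$ and the paper's $\mathbf B$ on $\{d_1+1,\ldots,n\}$, one has $\det G=\det\mathbf B$ and the cancellation is immediate. The paper's route has the virtue of making the $h$-normal bundle and the matrix $\mathbf W$ explicit (which could be useful elsewhere), while yours is shorter and avoids the block-matrix identity entirely. One small point worth stating explicitly for completeness: your claim that $f_{p+1}\wedge\cdots\wedge f_{d_1}\wedge e_{d_1+1}\wedge\cdots\wedge e_n$ has unit $h$-norm uses that $e_{d_1+1},\ldots,e_n$ are $h$-orthonormal; this is implicit in the paper's identification $\mbox{dvol}_h=e^1\wedge\cdots\wedge e^n$ but is not spelled out in Proposition~\ref{extend}.
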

\begin{proof} First, we denote
\begin{equation*}
b_{jk}=\langle f_{j},f_{k}\rangle=\delta_{jk} + \sum^{p}_{\alpha=1}A^{\alpha}_{j}A^{\alpha}_{k}.
\end{equation*}
Observe that $\langle f_{\alpha},f_{k}\rangle=-A^{\alpha}_{k}$ for $j,k=d_{1}+1,\ldots ,n$ and $\alpha=1,\ldots,p$.
If $\mathbf B$ is the $(n-d_1)$-square matrix $\mathbf{B}=(b_{jk})$,  $\mathbf{A}$ is the $(n-d_1)\times p$ matrix
\begin{equation*}
\mathbf A=\left(\begin{array}{cccc}
A^{1}_{d_{1}+1}&A^{2}_{d_{1}+1}&\cdots&A^{p}_{d_{1}+1}\\
A^{1}_{d_{1}+2}&A^{2}_{d_{1}+2}&\cdots&A^{p}_{d_{1}+2}\\
\vdots &\vdots & \ddots &\vdots\\
A^{1}_{n}&A^{2}_{n}&\cdots&A^{p}_{n}
\end{array}\right)
\end{equation*}
and $\mathbf I_{n-d_1}$ is the $(n-d_1)$-identity matrix, we obtain
\begin{equation*}
\mathbf B=\mathbf I_{n-d_1}+\mathbf{A}\mathbf{A}^{T}.
\end{equation*}
Let us now calculate $v$. To do this, we project $f_1,\ldots,f_p$ on $TM^{\perp_h}$. These are
\begin{equation*}
w_{\alpha}:=f_{\alpha}+\sum^{n}_{j=d_{1}+1}x^{j}_{\alpha}f_{j},
\end{equation*}
where we choose $x^{j}_{\alpha}$ such that  $\langle w_{\alpha},f_{j}\rangle=0 $ for $\alpha=1,\ldots,p$, $j=p+1,\ldots,n$. In particular,
\begin{equation}\label{x}
\sum^{n}_{k=d_{1}+1}x^{k}_{\alpha}b_{jk}=A_j^\alpha.
\end{equation}
for $\alpha=1,\ldots,p$ and $j=d_1+1,\ldots,n$.  Thus the vectors $w_{\alpha}$, $\alpha=1,\ldots,p$, are linearly independent and orthogonal to $TM$. Hence, take
\begin{equation*}
v=\frac{ 1} {|w_{1}\wedge\cdots\wedge w_{p}|}w_{1}\wedge\cdots\wedge w_{p}.
\end{equation*}
Observe that $|w_{1}\wedge\cdots\wedge w_{p}|=|\det\mathbf{W}| $, where $\mathbf{W}=(W_{\alpha\beta})$ is a square $p$-matrix with
\begin{equation*}
W_{\alpha\beta}=\langle w_\alpha,w_\beta\rangle=\delta_{\alpha\beta}-\sum^{n}_{j=d_{1}+1}x^{j}_{\beta}A^{\alpha}_{j}.
\end{equation*}
If $c_{kj}$ are the entries of the inverse matrix of $\mathbf B$, it follows from (\ref{x}) that
\begin{equation}
x^{j}_{\alpha}=\sum^{n}_{k=d_{1}+1}c_{kj}A^{\alpha}_{k},\label{prodC}
\end{equation}
and therefore $W_{\alpha\beta}=\delta_{\alpha\beta}-\sum^{n}_{j=d_{1}+1}A^{\alpha}_{j}c_{jk}A^{\beta}_{k}$, or
$
\mathbf W=\mathbf I_p-\mathbf{A}^T\mathbf{B}^{-1}\mathbf{A}
$.
So,
\begin{equation*}
v\lrcorner\mbox{dvol}_h=\frac{w_{1}\wedge\cdots\wedge w_{p}}{|w_{1}\wedge\cdots\wedge w_{p}|}\,\lrcorner\,f^{1}\wedge\cdots\wedge f^{n}=\frac{1}{|\det\mathbf W|}f^{p+1}\wedge\cdots\wedge f^{n},
\end{equation*}
because $f^\alpha=0$ on $M$ for $\alpha=1,\ldots,p$.

Let us now find $\tau^{d}_{M}$. First
\begin{align*}
\tau_{M}&=|f_{p+1}\wedge\cdots\wedge f_{n}|^{-1}f_{p+1}\wedge\cdots\wedge f_{n}=|f_{d_1+1}\wedge\cdots\wedge f_{n}|^{-1}f_{p+1}\wedge\cdots\wedge f_{n}\\
&=|\det\mathbf B|^{-1}f_{p+1}\wedge\cdots\wedge f_{d_1}\wedge(e_{d_1+1}-\!\sum_{\alpha=1}^p\!A_{d_1+1}^\alpha f_{\alpha})\wedge\cdots\wedge (e_{n}-\!\sum_{\omega=1}^p\!A_{n}^\omega f_{\omega}).
\end{align*}
Therefore
\begin{align}\label{taudM}
\tau^{d}_{M}=|\det\mathbf B|^{-1}f_{p+1}\wedge\cdots\wedge f_{d_1}\wedge e_{d_1+1}\wedge\cdots\wedge e_{n},
\end{align}
where $d=d_1-p+2d_2+\ldots+ld_l=Q-p$. It follows that
$
|\tau^{d}_{M}|=|\det\mathbf B|^{-1}.
$
Then $$\dif\mu=|\det\mathbf B|^{-1}|\det\mathbf W|^{-1}f^{p+1}\wedge\cdots\wedge f^{n}\,.$$ To finish the proof, we must show that $|\det\mathbf B| |\det\mathbf W|=1$. To do this, we need only apply the matrix determinant lemma (see for instance \cite{Harville1998}). Taking determinants of
{\small{
\begin{equation*}
\left(\begin{array}{cc}
\mathbf I&\mathbf A^T\\
0&\mathbf B
\end{array}\right)
\left(\begin{array}{cc}
\mathbf I-\mathbf A^T\mathbf B^{-1}\mathbf A&0\\
\mathbf B^{-1}\mathbf A&\mathbf I
\end{array}\right)=\left(\begin{array}{cc}
\mathbf I&\mathbf A^{T}\\
\mathbf A& \mathbf B
\end{array}\right)=
\left(\begin{array}{cc}
\mathbf I&0\\
\mathbf A&\mathbf I
\end{array}\right)
\left(\begin{array}{cc}
\mathbf I&\mathbf A^{T}\\
0&\mathbf B-\mathbf A\mathbf A^T
\end{array}\right)
\end{equation*}}}
we have $\det(\mathbf B)\det(\mathbf W)=1$.
\end{proof}

\subsection{The Heisenberg group $\mathbb H^n$}\label{hspace}

We now will give a proof that the Heisenberg group has constant metric factor.

According to \cite{Magnani2008}, the metric factor $\theta(\tau)$ associated with a simple $s$-vector $\tau$ of $\wedge_s\mathfrak{g}$ is defined by
\begin{align}\label{teta}
\theta(\tau)=\mathcal{H}^s_{|\cdot|}(F^{-1}(\exp(\mathcal{L}(\tau))\cap B_1)),
\end{align}
where $F:\mathbb R^n\rightarrow\mathbb G$ is a system of graded coordinates with respect to an adapted orthonormal basis $(X_1,\ldots,X_n)$, $\mathcal{H}^s_{|\cdot|}$ is the $s$-dimensional Hausdorff measure with respect to the Euclidean norm of $\mathbb R^n$, $\mathcal L(\tau)$ is the unique subspace of $\mathfrak{g}$ associated to $\tau$ and $B_1$ is the open unit ball centered at $e$ with respect to a fixed homogeneous distance.

We now consider $\mathfrak h$ to be the Lie algebra of the Heisenberg group $\mathbb H^n$ generated by $e_1,\ldots, e_{2n}$, $e_{2n+1}$ with $[e_i,e_{i+n}]=e_{2n+1}$ for $i=1,\ldots,n$ and all the other brackets equal to $0$. We have $\mathfrak{h}=\mathfrak{h}^1\oplus\mathfrak{h}^2$, where $\mathfrak{h}^1$ is generated by $e_1,\ldots,e_{2n}$ and $\mathfrak{h}^2$ by $e_{2n+1}$. Take a scalar product on $\mathfrak{h}^1$ such that $e_1,\ldots,e_{2n}$ is an orthonormal basis. We extend the chosen scalar product to $\mathfrak h$ so that $e_{2n+1}$ is unitary and orthogonal to $\mathfrak{h}^1$.

The group operation of Heisenberg group $\mathbb H^n$ is given in exponential coordinates by
\begin{align*}
(x^1,\ldots,x^{2n},x^{2n+1})(y^1,\ldots,y^{2n},y^{2n+1})=&\Big( x^1+y^1,\ldots,x^{2n}+y^{2n},x^{2n+1}+y^{2n+1}\\
&+\frac{1}{2}\sum_{i=1}^n(x^iy^{i+n}-y^ix^{i+n})\Big)
\end{align*}
and the left-invariant vector fields are
\begin{equation*}
e_i=\frac{\partial}{\partial x^i}-\frac{1}{2}x^{i+n} \frac{\partial}{\partial x^{2n+1}},\,
e_{i+n}=\frac{\partial}{\partial x^{i+n}}+\frac{1}{2}x^{i} \frac{\partial}{\partial x^{2n+1}},\,
e_{2n+1}=\frac{\partial}{\partial x^{2n+1}}.
\end{equation*}

We shall first find a parametrization for the open unit ball $B_1$ of $\mathbb H^n$ whose expression also works for a parametrization of $\exp ^{-1}(B_1) \subset\mathfrak{h}$.

By \cite[Proposition~5.1]{Diniz2009}, the Carnot-Caratheodory geodesics  $c(t)$ passing by the identity are solutions of the following system
\begin{align*}
\dot{c}=\sum_{r=1}^{2n}\lambda_re_r,\quad \dot{\lambda}_j=-\lambda_0\lambda_{j+n},\quad \dot{\lambda}_{j+n}=\lambda_0\lambda_{j},\quad
\dot{\lambda}_0=0.
\end{align*}
with initial conditions $c(0)=(0,\ldots,0)$ and $\lambda_k(0)=\mu_k$, $k=0,\ldots,2n$. Therefore,
\begin{align*}
\lambda_0&=\mu_0\\
\lambda_j&=\mu_j\cos\left(\mu_0 t\right)-\mu_{j+n}\sin\left(\mu_0 t\right)\\
\lambda_{j+n}&=\mu_{j+n}\cos\left(\mu_0 t\right)+\mu_{j}\sin\left(\mu_0 t\right)
\end{align*}
and
\begin{align*}
\dot{x}^j&=\mu_j\cos\left(\mu_0 t\right)-\mu_{j+n}\sin\left(\mu_0 t\right)\\
\dot{x}^{j+n}&=\mu_{j+n}\cos\left(\mu_0 t\right)+\mu_{j}\sin\left(\mu_0 t\right)\\
\dot{x}^{2n+1}&=\frac{1}{2}\sum_{j=1}^n(\dot{x}^{j+n}x^j-\dot{x}^jx^{j+n}).
\end{align*}
Integrating from $0$ to $t$ we get
\begin{align*}
{x}^j(\mu_0,\ldots,\mu_{2n},t)&=\frac{\mu_j}{\mu_0}\sin\left(\mu_0 t\right)-\frac{\mu_{j+n}}{\mu_0}\left(1-\cos\left(\mu_0 t\right)\right)\\
{x}^{j+n}(\mu_0,\ldots,\mu_{2n},t)&=\frac{\mu_j}{\mu_0}\left(1-\cos\left(\mu_0 t\right)\right)+\frac{\mu_{j+n}}{\mu_0}\sin\left(\mu_0 t\right)\\
{x}^{2n+1}(\mu_0,\ldots,\mu_{2n},t)&=\frac{1}{2\mu_0^2}\left(\sum_{r=1}^{2n}\mu_r^2\right)\left(\mu_0 t-\sin\left(\mu_0 t\right)\right).
\end{align*}
Hence, the unitary ball $B_1$ defined by the homogeneous distance $\rho$ is parameterized by the Carnot-Caratheodory exponential as
\begin{equation*}
\exp_{CC}(\mu_0,\ldots,\mu_{2n})=\left({x}^1(\mu_0,\ldots,\mu_{2n},1),\ldots,{x}^{2n+1}(\mu_0,\ldots,\mu_{2n},1)\right)
\end{equation*}
on the cylinder $-2\pi\leq\mu_0\leq2\pi$, $\sum_{r=1}^{2n}\mu_r^2\leq 1$. Observe that
\begin{equation*}
\left[\begin{array}{l}
{x}^j(\mu_0,\ldots,\mu_{2n},1)\\
{x}^{j+n}(\mu_0,\ldots,\mu_{2n},1)
\end{array}\right]=\frac{1}{\mu_0}
\left[
\begin{array}{lr}
\sin(\mu_0)&-(1-\cos(\mu_0))\\
(1-\cos(\mu_0))&\sin(\mu_0)
\end{array}
\right]
\left[
\begin{array}{l}
\mu_j\\
\mu_{j+n}
\end{array}
\right].
\end{equation*}
We transform the matrix on the right-hand side into an  orthogonal matrix as follows:
\begin{equation*}
\left[\begin{array}{l}
{x}^j(\mu_0,\ldots,\mu_{2n},1)\\
{x}^{j+n}(\mu_0,\ldots,\mu_{2n},1)
\end{array}\right]=\frac{\sqrt{2(1-\cos(\mu_0))}}{|\mu_0|}
\left[
\begin{array}{lr}
\cos\alpha&-\sin\alpha\\
\sin\alpha&\cos\alpha
\end{array}
\right]
\left[
\begin{array}{l}
\mu_j\\
\mu_{j+n}
\end{array}
\right]
\end{equation*}
where $\cos\alpha=\dfrac{|\sin \mu_0|}{\sqrt{2(1-\cos\mu_0)}}$ and $\sin\alpha=\dfrac{\sqrt{2(1-\cos\mu_0)}}{2\sgn(\mu_0)}$. If we introduce the change of variables
\begin{equation*}
\left[
\begin{array}{l}
\overline\mu_j\\
\overline\mu_{j+n}
\end{array}
\right]
=
\left[
\begin{array}{lr}
\cos\alpha&-\sin\alpha\\
\sin\alpha&\cos\alpha
\end{array}
\right]
\left[
\begin{array}{l}
\mu_j\\
\mu_{j+n}
\end{array}
\right],
\end{equation*}
then we can parameterize $B_1$ by
\begin{align}\label{par}
\mathcal{P}(\mu_0,\overline\mu_1\ldots,\overline\mu_{2n})=\Big(\varepsilon(\mu_0)\overline\mu_1,\ldots,\varepsilon(\mu_0)\overline\mu_{2n}, \frac{\mu_0 -\sin\mu_0}{2\mu_0^2}\sum_{r=1}^{2n}\overline\mu_r^2\Big)\,.
\end{align}
on the cylinder $-2\pi\leq\mu_0\leq2\pi$, $\sum_{r=1}^{2n}\overline\mu_r^2\leq 1$, where $\varepsilon(\mu_0)= \dfrac{\sqrt{2(1-\cos\mu_0})}{|\mu_0|}$.

Using \eqref{par} we can prove the result following:
\begin{thm}
If $M$ is a non-horizontal submanifold of $\mathbb H^n$, then  the $\mu$-measure on $M$ is a constant multiple of the spherical Hausdorff measure.
\end{thm}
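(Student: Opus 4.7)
The plan is to invoke the integral formula \eqref{M}, which rewrites as $\dif\mu(x)=\theta(\tau^d_M(x))\,\dif S^d_\rho(x)$, and to show that on $\mathbb H^n$ the metric factor $\theta(\tau^d_M(x))$ is a constant independent of both the point $x\in M$ and the particular non-horizontal direction $\tau^d_M(x)$. By the defining formula \eqref{teta}, this reduces to showing that the quantity
\[
\mathcal H^{d}_{|\cdot|}\bigl(F^{-1}(\exp(\mathcal L(\tau^d_M(x)))\cap B_1)\bigr)
\]
does not depend on the subspace $\mathcal L(\tau^d_M(x))$ that can arise from a non-horizontal submanifold.

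First I would describe precisely which subspaces $\mathcal L(\tau^d_M(x))$ occur. From the explicit form of $\tau^d_M$ in \eqref{taudM}, applied to $\mathbb H^n$ (where the only layers are $\mathfrak h^1$ and $\mathfrak h^2=\mathbb R e_{2n+1}$), we have
\[
\tau^{d}_M=|\det \mathbf B|^{-1}\, f_{p+1}\wedge\cdots\wedge f_{d_1}\wedge e_{2n+1},
\]
so $\mathcal L(\tau^d_M(x))=W_x\oplus \mathbb R e_{2n+1}$, where $W_x=\operatorname{span}\{f_{p+1},\ldots,f_{d_1}\}\subset\mathfrak h^1$ is an arbitrary $(2n-p)$-dimensional subspace of the horizontal layer. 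Thus every admissible $\mathcal L(\tau^d_M(x))$ is a horizontal subspace of fixed dimension $2n-p$ direct-summed with the central line.

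Next I would exploit the rotational symmetry of $B_1$ visible in the parametrization \eqref{par}. Since the vertical coordinate $\frac{\mu_0-\sin\mu_0}{2\mu_0^2}\sum_r\overline\mu_r^2$ depends on the horizontal parameters only through the Euclidean norm $\sum_r\overline\mu_r^2$, and each horizontal coordinate is multiplied by the same scalar $\varepsilon(\mu_0)$, the set $\mathcal{P}([-2\pi,2\pi]\times\overline B^{2n})$ is invariant under any transformation $R\oplus\mathrm{Id}$ with $R\in O(2n)$ acting on the horizontal block. Hence $B_1$ is $O(2n)$-invariant, where $O(2n)$ acts on $\mathbb H^n\cong\mathbb R^{2n}\times\mathbb R$ by rotating the horizontal $\mathbb R^{2n}$ factor and fixing the last coordinate; this action is simultaneously a Euclidean isometry of $\mathbb R^{2n+1}$.

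Finally, given any two admissible subspaces $W_x\oplus\mathbb R e_{2n+1}$ and $W_y\oplus\mathbb R e_{2n+1}$, pick $R\in O(2n)$ carrying $W_x$ onto $W_y$; the induced map $\widetilde R=R\oplus\mathrm{Id}$ preserves $B_1$, preserves the Euclidean metric on $\mathbb R^{2n+1}$, and sends $F^{-1}\!\exp(W_x\oplus\mathbb R e_{2n+1})$ onto $F^{-1}\!\exp(W_y\oplus\mathbb R e_{2n+1})$. Consequently the two sets have the same Euclidean Hausdorff measure, so $\theta(\tau^d_M(x))$ equals a universal constant $\theta_0$ depending only on $p$ and $n$; plugging back into \eqref{M} yields $\mu=\theta_0\,S^d_\rho$ on $M$. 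The only real work in this plan is the verification that $B_1$ has the claimed $O(2n)$-symmetry, which is the step I expect to require the closest reading of \eqref{par}; the rest is bookkeeping.
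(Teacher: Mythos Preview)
Your proposal is correct and follows essentially the same route as the paper: identify $\mathcal L(\tau^d_M)$ as a horizontal subspace plus the center via \eqref{taudM}, use the parametrization \eqref{par} to see that the unit ball is invariant under $O(2n)$ acting on the horizontal layer and fixing $e_{2n+1}$, and conclude that the metric factor is constant because any two such subspaces are related by such a rotation. The paper packages the same argument slightly more formally via $F=\exp\circ A$ and $\tilde\phi=A\phi A^{-1}$ with $\phi\in O(2n+1,\mathbb R)$ fixing the last coordinate, but the content is identical.
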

\begin{proof} According to formula \eqref{M} and \eqref{teta} we must show that
\begin{align*}
\theta(\tau_M^d(x))=\mathcal{H}^{2n+1-p}_{|\cdot|}(F^{-1}(\exp(\mathcal{L}(\tau_M^d(x)))\cap B_1))
\end{align*}
is constant on a non-horizontal submanifold $M$ of $\mathbb{H}^n$.

From \eqref{taudM} we get

\begin{equation*}
\tau^{d}_{M}=|\det\mathbf B|^{-1}f_{p+1}\wedge\cdots\wedge f_{2n}\wedge e_{2n+1}.
\end{equation*}
Therefore, ${\mathcal L}(\tau^d_M)$ is generated by $f_{p+1},\ldots,f_{2n},e_{2n+1}$.

Let $A:\mathbb R^{2n+1}\rightarrow \mathfrak h$ be the linear transformation such that $F:\mathbb R^{2n+1}\rightarrow \mathbb H^{2n+1}$ is given by $F=\exp\circ A$. Then,
\begin{align*}
F^{-1}(\exp ({\mathcal L}(\tau_M^d))\cap B_1)=A^{-1}({\mathcal L}(\tau_M^d)\cap \exp^{-1}(B_1)).
\end{align*}
Let $\phi\in O(2n+1,\mathbb{R})$ such that $\phi(0,\ldots,0,1)=(0,\ldots,0,1)$ and $\tilde \phi=A\phi A^{-1}$.
Note that $\tilde\phi$ acts in $\mathfrak h$ as an isometry, fixes $e_{2n+1}$ and ${\mathcal L}(\tilde \phi\tau_M^d)=\tilde \phi{\mathcal L}(\tau_M^d)$.

It follows from formula (\ref{par}) that $\tilde \phi(\exp^{-1}(B_1))=\exp^{-1}(B_1)$. Hence,
\begin{align*}
\theta(\tilde \phi\tau_M^d)&={\mathcal H}^{2n+1-p}_{|.|}\Big(F^{-1}\Big(\exp({\mathcal L}(\tilde \phi\tau_M^d))\cap B_1\Big)\Big)\\
&={\mathcal H}^{2n+1-p}_{|.|}\Big(A^{-1}\Big({\mathcal L}(\tilde \phi\tau_M^d)\cap\exp^{-1}(B_1)\Big)\Big)\\
&={\mathcal H}^{2n+1-p}_{|.|}\Big(A^{-1}\Big(\tilde \phi {\mathcal L}(\tau_M^d)\cap \tilde \phi\Big(\exp^{-1}(B_1)\Big)\Big)\Big)\\
&={\mathcal H}^{2n+1-p}_{|.|}\Big(A^{-1}\circ \tilde \phi\Big({\mathcal L}(\tau_M^d)\cap \exp^{-1}(B_1)\Big)\Big)\\
&={\mathcal H}^{2n+1-p}_{|.|}\Big(\phi\circ A^{-1}\Big({\mathcal L}(\tau_M^d)\cap \exp^{-1}(B_1)\Big)\Big)\\
&={\mathcal H}^{2n+1-p}_{|.|}\Big(A^{-1}\Big({\mathcal L}(\tau_M^d)\cap \exp^{-1}(B_1)\Big)\Big)\\
&=\theta(\tau_M^d).
\end{align*}
Since, for any $y\in M$, ${\mathcal L}(\tau^{d}_{M}(y))$ can be obtained by the action of some $\tilde\phi$ on ${\mathcal L}(\tau^{d}_{M}(x))$, the function $\theta(\tau^{d}_{M})$ is constant on $M$.

\end{proof}

\section{First variation of $\mu$}\label{vvf}
Consider an immersion $i:M\rightarrow\mathbb G$ such that $i(M)$ in $\mathbb G$ is a non-horizontal submanifold of codimension $p$.  On $\mathbb G$ we have the volume element
\begin{equation}\label{volumeG}
\dif V=e^1\wedge\cdots\wedge e^n=f^1\wedge\cdots\wedge f^n
\end{equation}
and on $i(M)$ the volume form
\begin{equation*}
\tilde\Gamma(0):=\dif \mu=f^{p+1}\wedge\cdots\wedge f^n.
\end{equation*}
Let $F:(-\epsilon,\epsilon)\times M\rightarrow\mathbb{G}$ be a differentiable map such that $F_u:M\rightarrow\mathbb{G}$ is an immersion for all $u\in(-\epsilon,\epsilon)$, where $F_u(p)=F(u,p)$, and $i=F_0$. The map $F$ is a variation of $i$.  On each submanifold $F_u(M)$ we have the volume form $\tilde\Gamma(u)$ as constructed in Section \ref{Hmnhs}. Thus, we have a family of volume forms $\Gamma(u)=F_u^*(\tilde\Gamma(u))$ on $M$ and by the arguments of \cite[p.~286]{Spivak1979}  we have
\begin{align}\label{leibniz}
\left.\frac{\dif}{\dif u}\right|_{u=u_0}\int_M\Gamma(u)=\int_M\left.\frac{\dif}{\dif u}\right|_{u=u_0}\Gamma(u)\,.
\end{align}
We now state the main result of this section.
\begin{thm} \label{firstvar}
Let $i:M\rightarrow\mathbb G$ be an immersion of an oriented $(n-p)$-dimensional manifold $M$ into a sub-Riemannian stratified Lie group $(\mathbb G,D,\langle\, ,\,\rangle)$ as a non-horizontal submanifold and let $F:(-\epsilon,\epsilon)\times M \rightarrow\mathbb{G}$ be a variation of $i$ through immersions with variation vector field $W$. If $\Gamma(u)=F_u^*(\tilde\Gamma(u))$, where $\tilde\Gamma(u)$ is the volume form of the $\mu$-measure in $F_u(M)$ in accordance with the given orientation of $M$, then
\begin{equation*}
\dot{\Gamma}(0)=i^*\left(
H_{ W^\perp}+\sigma_{ W^\perp}\right)\Gamma(0)+\dif \left(i_*^{-1}(W^\top)\lrcorner\Gamma(0)\right),
\end{equation*}
where $H$ is the mean curvature and $\sigma$ is the mean torsion of $i(M)$.
\end{thm}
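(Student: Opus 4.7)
The plan is to exploit linearity of $\dot\Gamma(0)$ in the variation field $W$, split $W = W^\top + W^\perp$ along $i(M)$, and treat the two components separately. For the $W^\top$-piece I choose the particular variation $F^T_u(p) = i(\phi^X_u(p))$, where $X = i_*^{-1}(W^\top)$ and $\phi^X$ is the flow of $X$ on $M$. Then $F^T_u(M) = i(M)$, the adapted frame is $u$-independent, and $\Gamma(u) = (\phi^X_u)^*\Gamma(0)$, so $\dot\Gamma(0) = \mathcal{L}_X\Gamma(0) = d(X\lrcorner\Gamma(0))$; the remaining Cartan term $X\lrcorner d\Gamma(0)$ vanishes because $\Gamma(0)$ is top-degree on $M$. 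This already yields the exact term in the claimed formula.

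For $W = W^\perp \in TM^\perp$, the main obstruction is that $\tilde\Gamma(u)$ is defined only on $F_u(M)$, not on an ambient neighborhood. I resolve this using the identity $\tilde\Gamma(u) = \xi(u)\lrcorner dV$ (established in the proof of Theorem \ref{dS}), where $\xi(u) = f_1(u)\wedge\cdots\wedge f_p(u)\in\Lambda^p D$ is the unit $p$-vector in $D$ orthogonal to $TF_u(M)\cap D$. Since $W^\perp$ is transverse to $i(M)$, the map $F$ locally parametrizes a neighborhood of $i(M)$, and I extend $\xi$ smoothly to a $p$-vector field $\tilde\xi$ on this neighborhood by setting $\tilde\xi|_{F_u(M)} = \xi(u)$. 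The ambient $(n-p)$-form $\tilde\omega := \tilde\xi\lrcorner dV$ then satisfies $F_u^*\tilde\omega = \Gamma(u)$, giving
$$\dot\Gamma(0) = i^*\mathcal{L}_W\tilde\omega = i^*\bigl(d(W\lrcorner\tilde\omega) + W\lrcorner d\tilde\omega\bigr).$$
Because $f^j(W^\perp) = 0$ for every $j > p$, the form $W^\perp\lrcorner\tilde\omega$ vanishes on $i(M)$ by antisymmetry, so $i^*d(W^\perp\lrcorner\tilde\omega) = 0$ and only $i^*(W^\perp\lrcorner d\tilde\omega)$ remains.

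To evaluate this remaining term, I expand $d\tilde\omega$ via Proposition \ref{stru}: $df^k = -\sum_j\overline\omega^k_j\wedge f^j$ for $p+1\le k\le d_1$ and $df^k = \overline T^k$ for $k>d_1$. In each wedge $f^{p+1}\wedge\cdots\wedge df^k\wedge\cdots\wedge f^n$ the contraction $W^\perp\lrcorner$ can act nontrivially only on the $df^k$-factor (since $f^j(W^\perp) = 0$ for $j > p$), and after pullback to $M$ the outcome must provide an $f^k$-component in position $k-p$. The skew-symmetry $\overline\omega^k_k = 0$ kills the $\overline\omega^k_j(W^\perp)f^j$ contributions; what survives for $p+1\le k\le d_1$ assembles into $\sum_{\alpha\le p}w^\alpha\overline\omega^k_\alpha(f_k)\Gamma(0)$, which equals $H_{W^\perp}\Gamma(0)$ by formula (\ref{A}) and Definition \ref{curvmedia}. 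For $k>d_1$, using $f^k = e^k$ (a consequence of (\ref{fjj})), the surviving term is $\overline T^k(W^\perp,f_k)\Gamma(0) = f^k(\overline T(W^\perp,f_k))\Gamma(0)$, summing to $\sigma_{W^\perp}\Gamma(0)$ by Definition \ref{colchete}. Combining with the tangential contribution completes the proof.

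The main obstacle is the signed bookkeeping in the last step: among the many wedge products produced by the structural equations, one must identify precisely those that survive both the $W^\perp$-contraction and the pullback to $M$ (which annihilates every $f^\alpha$ with $\alpha\le p$), and then match the surviving coefficients to $H_{W^\perp}+\sigma_{W^\perp}$. The definitions of $H$ and $\sigma$ in Section \ref{sff} are precisely tuned for this identification to succeed, but the computation requires careful tracking of signs and indices throughout.
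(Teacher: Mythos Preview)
Your core computation—expanding $d\tilde\omega$ via the structural equations of Proposition~\ref{stru} and contracting with $W^\perp$—is the same calculation the paper carries out for $\widetilde W\lrcorner d\Phi$, and it correctly yields $(H_{W^\perp}+\sigma_{W^\perp})\Gamma(0)$. The organizational difference is that you split $W=W^\top+W^\perp$ at the outset and treat the pieces with separate, convenient variations (handling $W^\top$ elegantly via the flow of $i_*^{-1}W^\top$ on $M$), whereas the paper keeps $W$ intact, proves the formula at points where $W\notin T_{p_0}i(M)$ (so that $F$ is locally an embedding and the adapted frame extends to an ambient open set), and then passes to $\mathbb G\times\mathbb R$ to cover the points where $W$ happens to be tangent.

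Two genuine gaps remain. First, you invoke linearity of $\dot\Gamma(0)$ in $W$ and its independence from the particular variation realizing $W$, but never justify either; since the theorem is stated for a \emph{given} $F$, replacing that $F$ by your hand-picked tangential and normal variations is illegitimate without this step. It is true—$\dot\Gamma(0)$ is the differential at $i$ of the smooth functional $f\mapsto f^*\tilde\Gamma_f$ on immersions—but it must be said. Second, and more seriously, your extension $\tilde\xi|_{F_u(M)}=\xi(u)$ requires the leaves $F_u(M)$ to be locally disjoint, i.e.\ $F$ to be an embedding near $p_0$; this fails precisely where $W^\perp(p_0)=0$, and you do not address that case at all. (Your phrase ``$F$ locally parametrizes a neighborhood of $i(M)$'' is also inaccurate for codimension $p>1$, since the image of $F$ has dimension $n-p+1<n$; what you need, and what the paper uses, is merely that $F$ embeds $(-\epsilon,\epsilon)\times O$.) The paper's $\mathbb G\times\mathbb R$ device resolves the analogous difficulty by making the augmented field $(W,1)$ nowhere tangent. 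Your approach can be completed by writing an arbitrary local section of $TM^\perp$ as a difference of two nowhere-vanishing ones and appealing to the linearity you already assumed, but that closing argument is absent.
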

\begin{proof} We follow the ideas of Spivak \cite{Spivak1979}. Let $W=\left.\frac{\dif}{\dif u}F\right|_{u=0}$ be the variation vector field. Choose a point $p_0\in M$ such that $W_{p_0}=W(i(p_0))$ is not tangent to $i(M)$. Then, in a neighborhood $O$ of $p_0$ (and decreasing $\epsilon$ if necessary) we can suppose that $F:(-\epsilon,\epsilon)\times O\rightarrow\mathbb G$ is an embedding. Let us denote $F_u(O)=O_u$. On an open set $\widetilde O$ of $\mathbb G$ containing the image of $(-\epsilon,\epsilon)\times O$ we can choose a basis of vector fields $f_1,\ldots,f_n$ such that
\begin{enumerate}
\item [(i)] $f_1,\ldots,f_p$ restricted to $O_u$ are in $TO_u^\perp$;
\item [(ii)] $f_{p+1},\ldots,f_n$ restricted to $O_u$ are in $TO_u$;
\item [(iii)] $f_1,\ldots,f_{d_1}$ are orthonormal in $D$;
\item [(iv)] $f_i=e_i-\sum_{\alpha=1}^pA_i^\alpha f_\alpha$,  for $i=d_1+1,\ldots,n$.
\end{enumerate}
If $f^1,\ldots,f^n$ are the dual $1$-forms, then $F_u^*(f^\alpha)=0$ for $\alpha=1,\ldots,p$. Furthermore, if
$
\Phi=f^{p+1}\wedge\cdots\wedge f^n,
$
 we get $\tilde \Gamma(u)=\Phi|_{O_u}$ and
\begin{equation*}
\Gamma(u)=F_u^*\tilde \Gamma(u)=F_u^*\Phi=F_u^*(f^{p+1}\wedge\ldots\wedge f^n)
\end{equation*}
 is a volume form on $M$.

Now, the vector field $\frac{\dif}{\dif u}F$ defined on $F((-\epsilon,\epsilon)\times O)$ can be extended further to a vector field $\widetilde W$ defined on the open set $\widetilde O$. Associated to $\widetilde W$ there is a local $1$-parameter group of diffeomorphisms $\rho_u$ such that $\rho_u(F_v(p))=F_{u+v}(p)$. Hence, if $X$ is a tangent vector field on $O$, then $(\rho_u)_*(F_v)_*X=(F_{u+v})_*X$. Therefore,

\begin{equation}\label{Fu}
\dot{\Gamma}(u)=F_u^*L_{\widetilde W}\Phi=F_u^*\left(\widetilde W\lrcorner\dif\Phi+\dif \widetilde W\lrcorner\Phi\right),
\end{equation}
where $L_{\widetilde W}$ is Lie derivative with respect to $\widetilde W$.

On the other hand, using the Proposition \ref{stru} we obtain
\begin{align*}
\dif\Phi=&\sum_{j=p+1}^n(-1)^{j-(p+1)}f^{p+1}\wedge\cdots\wedge\dif f^j\wedge\cdots\wedge f^n\\
=&\sum_{j=p+1}^{d_1}(-1)^{j-(p+1)}f^{p+1}\wedge\cdots\wedge\left(\sum^n_{i=1}f^{i}\wedge\overline{\omega}_i^{j}\right)\wedge\cdots\wedge f^n\\
&+\sum_{j=d_1+1}^n(-1)^{j-(p+1)}f^{p+1}\wedge\cdots\wedge \overline T^j\wedge\cdots\wedge f^n
\end{align*}
Since $\overline T^j=\frac{1}{2}\sum_{k,l=1}^n\overline T^j(f_k,f_l)f^k\wedge f^l$, then we can write the above equality as follows.
\begin{align*}
\dif\Phi=&\sum_{\alpha=1}^p\left(\sum^{d_1}_{i=p+1}\overline{\omega}^{i}_{\alpha}(f_i)+\sum^n_{j=d_1+1}\overline T^j(f_\alpha,f_j)\right)f^\alpha\wedge\Phi\\
&+\sum_{\alpha,\beta=1}^p\left(\sum^{d_1}_{i=p+1}\overline{\omega}^{i}_{\alpha}(f_\beta)+\frac{1}{2}\sum^n_{j=d_1+1}\overline T^j(f_\alpha,f_\beta)\right)f^\alpha\wedge\Psi\\
=&\sum^p_{\alpha=1}\Big(H_{f_\alpha}+\sigma_{f_\alpha}\Big)f^\alpha\wedge\Phi+\sum_{\alpha,\beta=1}^p\Omega_{\alpha\beta}f^\alpha\wedge\Psi,
\end{align*}
where
\begin{equation*}
\Psi=f^{p+1}\wedge\cdots\wedge f^\beta\wedge\cdots\wedge f^n\,\, \mbox{and}\,\, \Omega_{\alpha\beta}=\sum^{d_1}_{i=p+1}\overline{\omega}^{i}_{\alpha}(f_\beta)+\frac{1}{2}\sum^n_{j=d_1+1}\overline T^j(f_\alpha,f_\beta).
\end{equation*}
It now follows that
\begin{align*}
\widetilde W\lrcorner\dif\Phi=&\sum^p_{\alpha=1}\Big(H_{f_\alpha}+\sigma_{f_\alpha}\Big)\Big(\widetilde W\lrcorner(f^\alpha\wedge\Phi)\Big)+\sum_{\alpha,\beta=1}^p\!\!\Omega_{\alpha\beta}\Big(\widetilde W\lrcorner(f^\alpha\wedge\Psi)\Big)\\
=&\sum^p_{\alpha=1}\Big(H_{f_\alpha}+\sigma_{f_\alpha}\Big)\Big(f^\alpha(\widetilde W)\Phi-f^\alpha\wedge(\widetilde W\lrcorner\Phi)\Big)\\
&+\sum_{\alpha,\beta=1}^p\Omega_{\alpha\beta}\Big(f^\alpha(\widetilde W)\Psi-f^\alpha\wedge(\widetilde W\lrcorner\Psi)\Big).
\end{align*}
Since $F_u^*f^\alpha=0$ and $F_u^*\Psi=0$, then
\begin{equation*}
F_u^*(\widetilde W\lrcorner\dif\Phi)=F_{u}^*\left(\sum_{\alpha=1}^pf^\alpha(\widetilde W)(H_{f_\alpha}+\sigma_{f_\alpha})\right)\Gamma(u)=F_{u}^*\left(H_{\widetilde W^\perp}+\sigma_{\widetilde W^\perp}\right)\Gamma(u) \ .
\end{equation*}
For the other term is proved easily that
$
\dif (\widetilde W\lrcorner\Phi)=\dif(\widetilde W^\top\lrcorner\Phi).
$

Now, equation (\ref{Fu}) implies
\begin{equation*}
F_u^*L_{\widetilde W}\Phi=F_{u}^*\left(H_{\widetilde W^\perp}+\sigma_{\widetilde W^\perp}\right)\Gamma(u)+F_{u}^*(\dif(\widetilde W^\top\lrcorner\Phi)) \ .
\end{equation*}
By setting $u=0$ we obtain the proof of this theorem at any point $p_0$ for which $W(p_0)$ is not tangent to $i(M)$.
In the general case, we proceed as follows.

Let ${\bf{G}}=\mathbb{G}\times\mathbb{R}$ be the Lie group whose Lie algebra ${\bf{g}}=\mathfrak{g}\oplus {\mathbb{R}}$ (where $\mathfrak{g}$ is the Lie algebra of $\mathbb{G}$) admits a stratification
${\bf{g}}={\bf{g^1}\oplus g^2\oplus\cdots\oplus g^r}$ such that
\begin{equation*}
{\bf{g^1}}=\mathfrak{g}^1\oplus\mathbb{R}, \ {\bf g^i}=\mathfrak{g}^i, \ \mbox{for} \ i=2,3,\ldots,r \ \mbox{and} \ [\mathfrak{g}^i,\mathbb{R}]=0 \ \forall \ i .
\end{equation*}
Let $\bf{D}$ be the distribution on $\bf{G}$ generated by $\bf{g}^1$. Therefore, $(\bf{G, D},\langle,\rangle)$ is a sub-Riemannian stratified Lie group, where  by $\left\langle,\right\rangle$ we denote the scalar product induced on $\bf{G}$.

Define ${\widehat{F}}:(-\epsilon,\epsilon)\times M\rightarrow{\bf{G}}$ by
$
{\widehat{F}}(u,p)=(F(u,p),u).
$
The new variation vector field $\bf{W}$ is
$
{\bf{W}}(p)=(W(p),1),
$
where $1$ denotes the unit vector field on $\mathbb{R}$. Observe that $\bf{W}$ is not tangent to $\widehat{F}_0(M)$, so the theorem holds for $\widehat{F}$. On the other hand
\begin{equation*}
{\bf{A_{\widetilde{W}^{\perp}}}}({\bf{X}})=(A_{\widetilde{W}^{\perp}}(X),0), \ {\bf\overline{T}(\widetilde{W}^\perp,X)}=(\overline{T}(\widetilde{W}^\perp,X),0)
\end{equation*}
where ${\bf{X}}=X+\xi\in T{\bf{G}}=T\mathbb G\oplus\mathbb{R}$, which proves the claim of theorem for $F$.

\end{proof}

\begin{cor}\label{eqmin}
Let $M$ be an oriented  compact manifold  of dimension $n-p$ with boundary, $\mathbb G$ a stratified Lie group of dimension $n$ and $i:M\rightarrow\mathbb G$  an immersion  of $M$ in $\mathbb G$ as a non-horizontal submanifold. Let $F:(-\epsilon,\epsilon)\times M \rightarrow\mathbb G$ be  a variation of $i$ through immersions, with variation vector field $W$. If $\Gamma(u)=F_u^*(\tilde\Gamma(u))$, where $\tilde\Gamma(u)$ is the volume form in $F_u(M)$ in accordance with the given orientation of $M$, and
\begin{equation*}
V(u)=\int_M\Gamma(u)
\end{equation*}
then
\begin{equation*}
\left.\frac{\dif}{\dif u}V(u)\right|_{u=0}=\int_M i^*\left(
H_{ W^\perp}+\sigma_{ W^\perp}\right)\Gamma(0)+\int_{\partial M} i_*^{-1}(W^\top)\lrcorner\Gamma(0),
\end{equation*}
where $H$ is the mean curvature and $\sigma$ the mean torsion of $i(M)$. In particular, if $F$ is a variation that keeps the boundary of $M$ fixed, then
\begin{equation*}
\left.\frac{\dif}{\dif u}V(u)\right|_{u=0}
=\int_M i^*\left(
H_{ W^\perp}+\sigma_{ W^\perp}\right)\Gamma(0)\,.
\end{equation*}
The immersion $i$ is a critical point for $V$ among all immersions $f:M\rightarrow\mathbb G$ with $f=i$ on $\partial M$ if and only if
\begin{equation*}
H+\sigma=0
\end{equation*}
on $TM^\perp$.
\end{cor}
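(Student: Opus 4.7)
The plan is to deduce the corollary directly from Theorem~\ref{firstvar} by integrating and applying Stokes' theorem, then to extract the Euler--Lagrange equation $H+\sigma=0$ via a standard fundamental-lemma-of-calculus-of-variations argument.

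First I would integrate the pointwise identity of Theorem~\ref{firstvar} over $M$ and interchange derivative and integral using \eqref{leibniz}, obtaining
\begin{equation*}
\left.\frac{\dif}{\dif u}V(u)\right|_{u=0}
=\int_M\dot\Gamma(0)
=\int_M i^{*}\bigl(H_{W^\perp}+\sigma_{W^\perp}\bigr)\Gamma(0)
+\int_M \dif\bigl(i_*^{-1}(W^\top)\lrcorner\Gamma(0)\bigr).
\end{equation*}
Since $M$ is oriented, compact with boundary, Stokes' theorem converts the last integral into $\int_{\partial M} i_*^{-1}(W^\top)\lrcorner\Gamma(0)$, which is the first claim.

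For the second claim, if $F$ is a variation fixing $\partial M$ pointwise, then $W|_{\partial M}=0$ and hence $(W^\top)|_{\partial M}=0$, so the boundary contribution vanishes, leaving only the bulk term in $H_{W^\perp}+\sigma_{W^\perp}$.

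For the characterization of critical points, the ``if'' direction is immediate: assuming $H+\sigma=0$ identically on $TM^\perp$, the integrand $i^{*}(H_{W^\perp}+\sigma_{W^\perp})$ vanishes for every $W$, so $\dif V/\dif u|_{u=0}=0$ for every admissible variation. For the converse, I would argue pointwise using the fundamental lemma of the calculus of variations: suppose $H_{\xi_0}+\sigma_{\xi_0}\neq 0$ at some interior point $p_0\in M$ for some $\xi_0\in T_{p_0}M^\perp$. Extend $\xi_0$ to a smooth section $\xi$ of $TM^\perp$ in a small neighborhood of $p_0$ and multiply by a bump function $\varphi\geq 0$ supported in a small coordinate chart about $p_0$ away from $\partial M$; then $W:=\varphi\,\xi$ is a section of $i^{*}T\mathbb G$ with compact support in the interior, equal to its normal part. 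Realize it as the variation vector field of a variation $F(u,p)=\exp_{i(p)}(uW(p))$ (using, say, the flow of a local extension), which fixes $\partial M$ and keeps each $F_u$ an immersion for small $u$. The variation formula then gives $\int_M \varphi\,(H_\xi+\sigma_\xi)\,\Gamma(0)\neq 0$ (choosing the sign of $\xi$ suitably), contradicting criticality. Hence $H_\xi+\sigma_\xi=0$ for every $\xi\in TM^\perp$, i.e.\ $H+\sigma=0$ on $TM^\perp$.

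The only mildly delicate step is the last one: one must know that every smooth compactly supported section $W$ of $i^{*}T\mathbb G$ vanishing on $\partial M$ is realizable as a variation vector field through immersions. For small $u$ this follows because the immersion condition is open and $W$ has compact support; no genuine obstacle arises. Everything else reduces to Theorem~\ref{firstvar}, Stokes, and a bump-function argument.
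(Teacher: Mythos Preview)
Your proof is correct and follows essentially the same route as the paper: integrate Theorem~\ref{firstvar}, apply \eqref{leibniz} and Stokes, then kill the boundary term when $W|_{\partial M}=0$. For the converse direction the paper packages the fundamental-lemma argument slightly differently---it represents the linear functional $H+\sigma$ on $TM^\perp$ by a global section $v\in TM^\perp$ via $H_w+\sigma_w=\langle v,w\rangle$ and takes $W^\perp=\phi v$ with $\phi>0$ on the interior and $\phi|_{\partial M}=0$, so that criticality forces $\int_M\phi|v|^2\,\Gamma(0)=0$ and hence $v\equiv 0$---but your local bump-function version is equivalent and equally valid.
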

\begin{proof} The first claim follows from Theorem \ref{firstvar}, formula (\ref{leibniz}) and the Stokes'  theorem. If $F$ keeps $\partial M$ fixed, then $W=0$ on $M$, so $i_*^{-1}(W^\top)\lrcorner\Gamma(0)$ on $\partial M$, which proves the second statement. The third one follows from the choice of a vector field $v\in TM^\perp$ such that for every $w\in TM^\perp$, $H_w+\sigma_w=\langle v,w\rangle$ and a positive function $\phi$ on the interior of $M$ such that $\left.\phi\right|_{\partial M}=0$. The last statement follows by taking $W^\perp=\phi v$.
\end{proof}
\begin{cor}\label{eqminhiper}
Let the assumptions of Corollary \ref{eqmin} hold. If $M$ is a hypersurface,
then
\begin{equation*}
\left.\frac{\dif}{\dif u}V(u)\right|_{u=0}
=\int_M i^*\left(
H_{ W^\perp}\right)\Gamma(0)+\int_{\partial M} i_*^{-1}(W^\top)\lrcorner\Gamma(0),
\end{equation*}
where $H$ is the mean curvature of $i(M)$. In particular, if $F$ is a variation that keeps the boundary of $M$ fixed, then
\begin{equation*}
\left.\frac{\dif}{\dif u}V(u)\right|_{u=0}
=\int_M i^*\left(
H_{ W^\perp}\right)\Gamma(0),
\end{equation*}
The immersion $i$ is a critical point for $V$, among all immersions $f:M\rightarrow\mathbb G$ with $f=i$ on $\partial M$ if and only if
$
H=0
$
on $TM^\perp$.
\end{cor}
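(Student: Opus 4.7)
The statement is a specialization of Corollary \ref{eqmin} to codimension $p=1$, so my plan is simply to show that the \emph{mean torsion $\sigma$ vanishes identically on every non-horizontal hypersurface}. Once this is done, each of the three formulas in Corollary \ref{eqminhiper} is obtained by substituting $\sigma=0$ into the corresponding formula of Corollary \ref{eqmin}, including the Euler--Lagrange characterization $H=0$ in place of $H+\sigma=0$. Hence the only real work is proving $\sigma\equiv 0$.

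Since $p=1$, the bundle $TM^\perp$ is a line bundle generated by $f_1\in D$, so it suffices to check $\sigma_{f_1}=0$. Using $\overline T=\sum_{k>d_1}\overline T^k\otimes e_k$ together with $f_j=e_j-A_j^1 f_1$ for $j>d_1$, one sees that $f^j(e_k)=\delta^j_k$ for $j,k>d_1$, so Definition \ref{colchete} reduces to $\sigma_{f_1}=\sum_{j>d_1}\overline T^j(f_1,f_j)$. I would then expand $\overline T^j=-\tfrac12\sum c^j_{ab}\,e^a\wedge e^b$, use $f_1=\sum_{a\le d_1}a_1^a e_a$, and split the inner sum over $b$ into the ranges $b\le d_1$ and $b>d_1$. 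The first range produces a combination proportional to $\sum_{a,b\le d_1}c^j_{ab}\,a_1^a a_1^b$, which vanishes by antisymmetry of $c^j_{ab}$ in $(a,b)$; the second range collapses via $e^b(f_j)=\delta^b_j$ to yield
$$\sigma_{f_1}=-\sum_{a\le d_1}a_1^a\sum_{j=d_1+1}^n c^j_{aj}.$$

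The closing step identifies the inner sum as $\mathrm{tr}\,\mathrm{ad}(e_a)$. The grading condition $c^j_{ab}=0$ unless $\deg a+\deg b=\deg j$ forces $c^j_{aj}=0$ for $j\le d_1$ (this would require $\deg a=0$), so $\sum_{j>d_1}c^j_{aj}=\sum_{j=1}^n c^j_{aj}=\mathrm{tr}\,\mathrm{ad}(e_a)$. Nilpotency of $\mathfrak g$ makes $\mathrm{ad}(e_a)$ a nilpotent endomorphism, hence trace-free, giving $\sigma_{f_1}=0$. Everything except this last identification is routine frame bookkeeping; the genuine, if minor, obstacle is recognizing that the grading-compatible pieces of $\sigma_{f_1}$ reassemble into a trace of $\mathrm{ad}$, at which point nilpotency of the stratified Lie algebra closes the argument.
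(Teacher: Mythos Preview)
Your argument is correct and follows essentially the same route as the paper: both reduce to showing $\sigma_{f_1}=0$ by expanding $\overline T^j(f_1,f_j)$ in the adapted frame, killing one piece by antisymmetry (the paper phrases this as $\overline T^j(f_1,f_1)=0$ after using $f_j=e_j-A_j^1 f_1$), and killing the other via $c^j_{aj}=0$.

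The only difference is in that last step. The paper simply notes $c^j_{ij}=0$ directly from the grading (since $\deg i\ge 1$ forces $\deg i+\deg j>\deg j$), which immediately gives $\overline T^j(f_1,f_j)=0$ term by term. You instead extend the sum to all $j$, identify $\sum_j c^j_{aj}=\mathrm{tr}\,\mathrm{ad}(e_a)$, and invoke nilpotency. This is valid but unnecessarily indirect: the very grading argument you use to show $c^j_{aj}=0$ for $j\le d_1$ works verbatim for every $j$, so the detour through $\mathrm{tr}\,\mathrm{ad}$ can be dropped.
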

\begin{proof} Observe that
\begin{equation*}
\overline T^j(f_\alpha,f_j)=\overline T^j(f_\alpha,e_j-\sum_{\beta=1}^p A_j^\beta f_\beta)=\sum_{i=1}^{d_1} a_\alpha^i\overline T^j(e_i,e_j)-\sum_{\beta=1}^p A_j^\beta\overline T^j(f_\alpha,f_\beta)
\end{equation*}
and as $c_{ij}^j=0$ we get
\begin{equation*}
\overline T^j(f_\alpha,f_j)=-\sum_{\beta=1}^p A_j^\beta\overline T^j(f_\alpha,f_\beta)
\end{equation*}
 and $\overline T^j(f_\alpha,f_\beta)=0$ if $\deg j\neq 2$. In particular, if $M$ is a hypersurface of $\mathbb G$, then $p=1$ and we obtain $\overline T^j(f_1,f_j)=0$ for every $j=d_1+1,\ldots,n$.
 \end{proof}
A special  case is when $M$ has the same dimension of $\mathbb G$, so that $M$ is a compact $n$-dimensional manifold with boundary in $\mathbb G$. Then $T_pM=T_p\mathbb G$ so $TM^\perp=0$ for all $p\in M$. Consequently, $H_{ W^\perp}+\sigma_{ W^\perp}=0$ and
\begin{equation*}
\left.\frac{\dif}{\dif u}V(u)\right|_{u=0}=\int_{\partial M} i_*^{-1}(W)\lrcorner\Gamma(0),
\end{equation*}
for any $W\in TM$.
If $\partial M$ is non-horizontal in all of its points, we can choose an adapted  basis $f_1,\ldots,f_n$ of $T\mathbb G|_{\partial M}$. Then
$ i_*^{-1}(W)\lrcorner\Gamma(0)=f^1(W)f^2\wedge\cdots\wedge f^n$. Since $f^1=0$ on $T\partial M$ we have that
\begin{equation*}
\left.\frac{\dif}{\dif u}V(u)\right|_{u=0}=\int_{\partial M}f^1(W)\dif\mu,
\end{equation*}
where $\dif\mu=f^2\wedge\cdots\wedge f^n$.

Motivated by Corollary \ref{eqmin} we give the following definition.
\begin{dfn}\label{Hmin}
We say that a non-horizontal submanifold is \emph{minimal} if
\begin{equation*}
H+\sigma=0\,\, \mbox{on}\,\, TM^\perp.
\end{equation*}
\end{dfn}

\begin{ex}\label{verticalH}
Let $M\subset\mathbb{R}^{2n}$ be a minimal submanifold and $N\subset\mathbb H^n$ non-horizontal submanifold defined by  $N=\{(x,t)\in \mathbb H^n: x\in M, t\in \mathbb R\}$. Then, $N$ is minimal.
\end{ex}
Let  $\pi:
\mathbb H^n\rightarrow \mathbb R^{2n}$ be the natural projection and let $\nabla^{\mathbb R}$ be the canonical Riemannian connection on $\mathbb R^{2n}$.
 Let $g_1,\ldots,g_p$ and $g_{p+1},\ldots,g_{2n}$ be orthonormal basis of $TM^{\perp}$ and $TM$, respectively. Let $f_1,\ldots, f_{2n}$ be a basis of $D$ restricted to $N$ such that $\pi_*(f_j)=g_j$ for $j=1,\ldots, 2n$. Therefore, $f_{2n+1}=\dfrac{\partial}{\partial x^{2n+1}}$ extends $f_{p+1},\ldots, f_{2n}$ to a basis of $TN$. Clearly, $\pi_*(\overline\nabla_{f_i} f_\alpha)=\nabla^{\mathbb R}_{g_i}g_\alpha$ for $i=p+1,\ldots, 2n$ and $\alpha=1,\ldots,p$.
As $
\overline\nabla f_\alpha=\sum_{i=p+1}^{2n}\overline\omega_\alpha^i\otimes f_i,
$
and if $
\nabla^{\mathbb R} g_\alpha=\sum_{i=p+1}^{2n}\psi_\alpha^i\otimes g_i
$ for $j=p+1,\ldots, 2n$, we have that $\pi^*\psi_\alpha^i=\overline{\omega}_\alpha^i.$ It follows from (\ref{A}) that
\begin{equation*}
\pi_{*}(A_{f_\alpha}(f_i))=-\sum_{j=p+1}^{2n}\pi_{*}(\overline\omega_\alpha^j(f_i)f_j)=-\sum_{j=p+1}^{2n}\psi_\alpha^j(g_i)g_j=A_{g_\alpha}(g_i).
\end{equation*}
Because $M$ is minimal, then
\begin{align}\label{AW}
\sum_{j=p+1}^{2n}\langle A_{f_\alpha}(f_j),f_j\rangle=0.
\end{align}
Now, for the mean torsion we have that
\begin{align}\label{TW}
 \overline T^{2n+1}(f_\alpha,f_{2n+1})=\sum_{k=1}^{2n}a_\alpha^k \overline T^{2n+1}(e_k,e_{2n+1})=0,
\end{align}
because $ \overline T(e_k,e_{2n+1})=-[e_k,e_{2n+1}]=0$ for $k=1,\ldots,2n$. Hence, from (\ref{AW}) and (\ref{TW}), $N$ is minimal.

The next example is well known in the literature (see \cite{Pansu1982}).
\begin{ex} Let $S$ be a non-horizontal surface in $\mathbb H^1$ at all points. If $S$ is minimal surface, then $S$ is ruled.
\end{ex}
By Corollary \ref{eqmin}, we have
$\langle A_{f_1}(f_2),f_2\rangle=0$ or $\langle \overline\nabla_{f_2}f_1,f_2\rangle=0$.
It follows that
$\langle \overline\nabla_{f_2}f_2,f_1\rangle=0$, and therefore $\overline\nabla_{f_2}f_2=0$.
Consider $\pi$ and $\nabla^\mathbb R$ as in the Example \ref{verticalH}.  Let $\gamma$ be a tangent horizontal curve to $S$, with $\gamma'(t)=f_2(\gamma(t))$ and $g(t)=\pi(\gamma(t))$ the projection of $\gamma$ on $\mathbb R^2$. Then, $|g'(t)|=|\pi_*\gamma'(t)|=1$ and
\begin{equation*}
\nabla^\mathbb R_{g'(t)}g'(t)=\pi_*(\overline\nabla_{f_2}f_2)=0.
\end{equation*}
Therefore, the curvature of curve $g$ is $0$. It follows that $g$ is a line in $\mathbb R^2$. If  $g(t)=(x_0+at,y_0+bt)$, then
\begin{equation*}
\gamma(t)=(x_0+at,y_0+bt,z_0+\frac{1}{2}(bx_0-ay_0)t)
\end{equation*}
is a line, and it follows that $S$ is ruled.

\section{Non-horizontal surfaces in $\mathbb{H}^2$}\label{H2}
	
In this section we give new examples of minimal non-horizontal surfaces in the 5-dimensional  Heisenberg group $\mathbb{H}^2$.

The Lie algebra $\mathfrak{h}=\mathfrak{h}^1\oplus\mathfrak{h}^2$ of the Heisenberg group $\mathbb{H}^2$ is generated by $e_1, e_2, e_3, e_4, e_5$ with $[e_1,e_3]=[e_2,e_4]=e_5$. Let us define two linear operators acting on $\mathfrak{h}^1$. The first operator we define by $Je_1=e_3$, $Je_2=e_4$, $Je_3=-e_1$ and $Je_4=-e_3$.
The second we define by $Re_1=e_2$, $Re_2=-e_1$, $Re_3=-e_4$ and $Re_4=e_3$. Consider the scalar product in $\mathfrak{h}^1$ such that $e_1, e_2, e_3, e_4$ is an orthonormal basis of $\mathfrak{h}^1$. One can easily prove the following properties of $J$ and $R$.
\begin{prop}\label{JR} The following relations for $J$ and $R$ are true:
\begin{enumerate}
\item $J^2=R^2=-\mbox{Id}$;
\item $RJ+JR=0$.
\item $\langle Rx,Ry\rangle=\langle Jx,Jy\rangle=\langle x,y\rangle, \ \forall \ x,y\in\mathfrak{h}^1$;
\item $\langle Jx,Ry\rangle+\langle Rx,Jy\rangle=0,\forall \ x,y\in\mathfrak{h}^1$;
\item $[x,Jy]=\langle x,y\rangle e_5,\, [x,Ry]=\langle Jx,Ry\rangle e_5,\, \forall \ x,y\in\mathfrak{h}^1$,
\end{enumerate}
where $\mbox{Id}$ is identity application of $\mathfrak h^1$.
\end{prop}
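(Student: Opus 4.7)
The plan is to verify each of the five identities by direct bilinear extension from computations on the orthonormal basis $e_1,e_2,e_3,e_4$ of $\mathfrak{h}^1$. Since $J$, $R$, the inner product, and the Lie bracket are all bilinear (or linear), checking on basis vectors suffices. I note first that the statement of $J$ likely contains a typo and should read $Je_4=-e_2$, consistent with what is needed for $J^2=-\mathrm{Id}$.

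For (1), I would tabulate $J^2 e_i$ and $R^2 e_i$ for $i=1,2,3,4$ (e.g.\ $J^2 e_1=Je_3=-e_1$, $R^2 e_3=R(-e_4)=-e_3$), obtaining $-\mathrm{Id}$ in each case. For (2), I would compute both $RJe_i$ and $JRe_i$ on each basis vector (e.g.\ $RJe_1=Re_3=-e_4$ versus $JRe_1=Je_2=e_4$) and observe they are opposite in every case. For (3), I would note that $J$ and $R$ each send the orthonormal basis $\{e_1,e_2,e_3,e_4\}$ to an orthonormal set (up to sign), and hence preserve the inner product on $\mathfrak{h}^1$. Item (4) then follows formally from (1), (2), (3): the identities $J^2=-\mathrm{Id}$ together with $J$ being an isometry give $J^\top=J^{-1}=-J$, so $J$ is skew, and similarly for $R$; therefore
\begin{equation*}
\langle Jx,Ry\rangle+\langle Rx,Jy\rangle=-\langle x,JRy\rangle-\langle x,RJy\rangle=-\langle x,(JR+RJ)y\rangle=0.
\end{equation*}

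For (5), I would again reduce to basis elements using bilinearity of the bracket. Using $[e_1,e_3]=[e_2,e_4]=e_5$ and vanishing of all other brackets among $e_1,\ldots,e_4$, I would run through the sixteen pairs $(e_i,Je_j)$ and check $[e_i,Je_j]=\langle e_i,e_j\rangle e_5$; for example $[e_2,Je_2]=[e_2,e_4]=e_5=\langle e_2,e_2\rangle e_5$, whereas a pair like $[e_1,Je_2]=[e_1,e_4]=0$ matches $\langle e_1,e_2\rangle=0$. The second formula $[x,Ry]=\langle Jx,Ry\rangle e_5$ is handled identically, with representative checks such as $[e_2,Re_3]=[e_2,-e_4]=-e_5$ matching $\langle Je_2,Re_3\rangle=\langle e_4,-e_4\rangle=-1$. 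There is no real obstacle here: everything reduces to linear algebra in a four-dimensional space, and the only care needed is to organize the case analysis systematically (or observe that the first identity in (5) is just saying that the bracket composed with $J$ gives the inner product on $\mathfrak{h}^1$, and the second is a direct consequence of the first once (2) is invoked to rewrite $Ry=-J(JR)y$ and so on).
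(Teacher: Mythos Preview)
Your proposal is correct; the paper itself does not supply a proof, stating only that ``one can easily prove'' these properties, so your direct basis-vector verification is exactly the intended (and only reasonable) argument. Your observation that $Je_4=-e_3$ is a typo for $Je_4=-e_2$ is also correct, and the slightly more conceptual derivations you give for (4) and the second half of (5) are clean shortcuts over a full case check.
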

 \begin{prop}\label{basissuph4}
If $v_4\in\mathfrak{h}_1$ is unitary vector and $v_3=Rv_4,\, v_2=-Jv_4,\, v_1=-Rv_2$, then $v_1, v_2, v_3, v_4$ is orthonormal basis of $\mathfrak{h}^1$ such that the only non-null brackets are $
[v_1,v_3]=[v_2,v_4]=e_5$.
\end{prop}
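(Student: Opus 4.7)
The proposition has two parts: orthonormality of $v_1,v_2,v_3,v_4$, and that the only non-zero brackets among them are $[v_1,v_3]=[v_2,v_4]=e_5$. My plan is to reduce every calculation to a short list of scalar identities valid for a single vector $x\in\mathfrak h^1$ and then specialize $x=v_4$.

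For orthonormality, the unit norms are immediate from part (3) of Proposition \ref{JR} (both $R$ and $J$ are isometries) applied to $v_3=Rv_4$, $v_2=-Jv_4$, and $v_1=-Rv_2$. For the six orthogonality relations I would first establish three ``diagonal'' identities valid for every $x\in\mathfrak h^1$: namely $\langle Jx,x\rangle=0$ and $\langle Rx,x\rangle=0$, each obtained by applying isometry to $\langle J^2 x,Jx\rangle$ (resp.\ $\langle R^2 x,Rx\rangle$) together with $J^2=R^2=-\mbox{Id}$, and $\langle Jx,Rx\rangle=0$, which is part (4) with $y=x$. After rewriting $v_1=RJv_4$ via the anticommutation $JR+RJ=0$, and pulling $R^{-1}=-R$ and $J^{-1}=-J$ across the inner product when convenient, each $\langle v_i,v_j\rangle$ with $i<j$ collapses to one of these three zero quantities evaluated at $v_4$.

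For the brackets, the main tool is part (5) of Proposition \ref{JR}, together with the identity $JRJ=R$, which follows from $JR=-RJ$ and $J^2=-\mbox{Id}$. The two non-zero brackets come out directly: $[v_1,v_3]=[RJv_4,Rv_4]=\langle JRJv_4,Rv_4\rangle e_5=|Rv_4|^2 e_5=e_5$, and $[v_2,v_4]=[v_2,Jv_2]=\langle v_2,v_2\rangle e_5=e_5$, where for the latter I use $Jv_2=J(-Jv_4)=v_4$. For the four remaining brackets ($[v_1,v_2]$, $[v_1,v_4]$, $[v_2,v_3]$, $[v_3,v_4]$), applying the same two tools reduces each to an inner product of the form $\langle Jv_4,v_4\rangle$, $\langle Rv_4,v_4\rangle$, or $\langle Jv_4,Rv_4\rangle$, all already shown to vanish in the orthonormality step.

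I expect no genuine obstacle; the argument is bookkeeping in the anticommutation $JR+RJ=0$ and the isometry property of $J$ and $R$. The only place to stay alert is the sign convention when moving $R^{\pm 1}$ or $J^{\pm 1}$ across the inner product, so I would consistently reduce every expression to one involving $v_4$ alone before declaring the case closed.
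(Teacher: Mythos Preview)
Your proposal is correct and follows essentially the same route as the paper: both arguments rely entirely on the algebraic identities of Proposition~\ref{JR} to verify orthonormality and to evaluate each bracket via the formulas $[x,Jy]=\langle x,y\rangle e_5$ and $[x,Ry]=\langle Jx,Ry\rangle e_5$. Your preliminary isolation of the three scalar identities $\langle Jx,x\rangle=\langle Rx,x\rangle=\langle Jx,Rx\rangle=0$ and the observation $JRJ=R$ make the bookkeeping slightly more uniform than the paper's case-by-case computation, but the substance is the same.
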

\begin{proof} It follows immediately from  Proposition \ref{JR} that $v_1,v_4,v_3,v_4$ is an orthonormal basis of $\mathfrak{h}^1$.
Moreover,
\begin{align*}
[v_1,v_3]&=-[Rv_2,Rv_4]=-\langle JRv_2,Rv_4\rangle e_5=\langle RJv_2,Rv_4\rangle e_5=\langle Jv_2,v_4\rangle e_5=e_5\\
[v_2,v_4]&=[-Jv_4,v_4]=\langle v_4,v_4\rangle e_5=e_5,\\ [v_2,v_3]&=[-Jv_4,Rv_4]=\langle Rv_4,v_4\rangle e_5=0,\\ [v_1,v_4]&=-[Rv_2,v_4]=\langle Jv_4,Rv_2\rangle e_5=-\langle v_2,Rv_2\rangle e_5=0,\\
[v_1,v_2]&=[Rv_2,Jv_4]=\langle Rv_2,v_4\rangle e_5 =-\langle v_1,v_4\rangle e_5=0,\\
[v_3,v_4]&=[ Rv_4,v_4]=-\langle Jv_4,Rv_4\rangle=0\,.
\end{align*}
\end{proof}
 Naturally, we can extend the operators $J$ and $R$ to $D$ and it satisfy $\overline \nabla J=0$ and $\overline\nabla R=0$. We then get the following consequence of Proposition \ref{basissuph4}.
\begin{cor}\label{baseD}
If $f_4\in D$ is an unitary vector field and $f_3=Rf_4, f_2=-Jf_4, f_1=-Rf_2$, then $f_1,f_2,f_3,f_4$ is an orthonormal basis of $D$ such that $\overline T^5(f_1,f_3)=\overline T^5(f_2,f_4)=-1$ and others are null.
\end{cor}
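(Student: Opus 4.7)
The plan is to observe that this is essentially a pointwise version of Proposition \ref{basissuph4} together with the dictionary between the Lie bracket on $\mathfrak{h}$ and the torsion tensor $\overline T$. First I would note that since $\overline\nabla J=0$ and $\overline\nabla R=0$, the operators $J,R$ extend to $D$ as parallel tensor fields whose algebraic properties listed in Proposition \ref{JR} (in particular isometry and the identities $J^2=R^2=-\mathrm{Id}$, $RJ+JR=0$, and the bracket formulas on $\mathfrak{h}^1$) hold at every point. Hence for a unit section $f_4$ of $D$, the pointwise definitions $f_3=Rf_4$, $f_2=-Jf_4$, $f_1=-Rf_2$ give, at each $x\in\mathbb{G}$, precisely the configuration of vectors analyzed in Proposition \ref{basissuph4}; orthonormality of $f_1,f_2,f_3,f_4$ in $D_x$ is immediate.

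Next I would convert the Lie-algebra bracket identities $[f_1,f_3]=[f_2,f_4]=e_5$ (all others zero), which Proposition \ref{basissuph4} delivers pointwise, into the desired torsion identities. For this I rely on the fact that $\overline T$ is a $(1,2)$-tensor with $\overline T(e_i,e_j)=-[e_i,e_j]$ on the left-invariant frame; writing $X=\sum X^ie_i$, $Y=\sum Y^j e_j$ a short calculation based on $\overline\nabla e_j=0$ yields
\[
\overline T(X,Y)_x=-\sum_{i,j}X^i(x)\,Y^j(x)\,[e_i,e_j]=-[X_x,Y_x],
\]
where the right-hand bracket is computed in $\mathfrak{h}$ after identifying $T_x\mathbb{G}$ with $\mathfrak{h}$ via the left-invariant frame. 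Substituting the pairs $(f_1,f_3)$ and $(f_2,f_4)$ and reading off the $e_5$-component via $\overline T^5$ gives $\overline T^5(f_1,f_3)=\overline T^5(f_2,f_4)=-1$, while all remaining pairs among $f_1,\ldots,f_4$ vanish because their brackets vanish in $\mathfrak{h}$.

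There is no real obstacle: the content is already in Proposition \ref{basissuph4}, and the only conceptual step is the passage from algebra-level brackets of vectors to tensor-level values of $\overline T$, which is routine once one checks the tensoriality formula above. The statement then follows by combining the two items.
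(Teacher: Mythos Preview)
Your proposal is correct and matches the paper's approach: the paper presents this corollary as an immediate consequence of Proposition~\ref{basissuph4}, obtained by extending $J$ and $R$ to $D$ (with $\overline\nabla J=\overline\nabla R=0$) and then passing from the pointwise Lie-algebra bracket identities to the torsion via $\overline T(e_i,e_j)=-[e_i,e_j]$. You have simply spelled out in detail the two steps the paper leaves implicit.
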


Let $M\subset \mathbb H^2$ be a non-horizontal submanifold of dimension two. Then $TM\cap D$ is of dimension one. Choose a unitary vector field $f_4 $ in $TM\cap D$. Using Corollary \ref{baseD} we complete it to an orthonormal basis of $D$ by choosing $f_3=Rf_4$, $f_2=-Jf_4$, $f_1=-Rf_2$ in $TM^\perp$. Moreover, we get for every $X\in TM$
\begin{equation*}
\overline\omega_4^1(X)=\overline\omega_3^2(X), \overline\omega_4^2(X)=-\overline\omega_3^1(X)\,\,\, \mbox{and}\,\,\, \overline\omega_2^1(X)=-\overline\omega_3^4(X)\,.
\end{equation*}
If $M$ is minimal non-horizontal surface, then
$
-\langle A_{\xi}(f_4),f_4\rangle+\overline T ^5(\xi,f_5)=0,\, \forall \xi\in TM^\perp
$.
So,
\begin{equation*}
-\overline\omega^4_1(f_4)=A_5^3,\, -\overline\omega^4_2(f_4)=0,\, \overline\omega^4_3(f_4)=-A_5^1\,.
\end{equation*}
Therefore,
\begin{equation*}
\overline\nabla_{f_4}f_4=-A_5^3f_1+A_5^1f_3\,.
\end{equation*}
If we set
$
f_4=a^1e_1+a^2e_2+a^3e_3+a^4e_4
$, where $a^i:\mathbb{H}^2\rightarrow\mathbb{R}$ are smooth functions, then
\begin{align*}
f_2&=-Jf_4=a^3e_1+a^4e_2-a^1e_3-a^2e_4,\\
f_3&=Rf_4=-a^2e_1+a^1e_2+a^4e_3-a^3e_4,\\
f_1&=-Rf_2=a^4e_1-a^3e_2+a^2e_3-a^1e_4\,.
\end{align*}
Let $\varphi(t,s)$ be a parametrization of $M$ such that $\varphi_s(t,s)=f_4(\varphi(t,s))$. Then,
\begin{eqnarray}\label{system}
\frac{\dif}{\dif s}\left[\begin{array}{c}
a^1\\
a^2\\
a^3\\
a^4
\end{array}\right]=\left[
\begin{array}{cccc}
0&-\overline\omega^3_4(f_4)&0&\overline\omega^1_4(f_4)\\
\overline\omega^3_4(f_4)&0&-\overline\omega^1_4(f_4)&0\\
0&\overline\omega^1_4(f_4)&0&\overline\omega^3_4(f_4)\\
-\overline\omega^1_4(f_4)&0&-\overline\omega^3_4(f_4)&0
\end{array}\right]
\left[\begin{array}{c}
a^1\\
a^2\\
a^3\\
a^4
\end{array}\right]\,.
\end{eqnarray}
We shall denote by $O(4,\mathbb{R})$ the set of orthogonal matrices of order $4$. So,
if $\Phi(t,s)\in O(4,\mathbb{R})$ is a fundamental solution of (\ref{system}), then  it satisfies
\begin{eqnarray}\label{systemsol}
\left[\begin{array}{c}
a^1(t,s)\\
a^2(t,s)\\
a^3(t,s)\\
a^4(t,s)
\end{array}\right]=\Phi(t,s)
\left[\begin{array}{c}
a^1(t,0)\\
a^2(t,0)\\
a^3(t,0)\\
a^4(t,0)
\end{array}\right]\,.
\end{eqnarray}
Therefore, if we write $\varphi:\mathbb R^2\rightarrow \mathbb R^5$ as
\begin{equation*}
\varphi(t,s)=(x^1(t,s),x^2(t,s),x^3(t,s),x^4(t,s),x^5(t,s)),
\end{equation*}
then
$\dfrac{\partial x^i}{\partial s}=a^i$ for $i=1,2,3,4$ and $\dfrac{\partial x^5}{\partial s}=\dfrac 1 2 (a^3x^1+a^4x^2-a^1x^3-a^2x^4)$. From this we get
$$x^i(t,s)=x^i_t(0)+\int_0^sa^i_t(u)\dif u$$
for $i=1,2,3,4$ and
\begin{equation*}
x^5_t(s)=x^5_t(0)+\frac 1 2\int_0^s\Big(a^3_{t}(u)x^1_t(u)+a^4_{t}(u)x^2_t(u)-a^1_{t}(u)x^3_t(u)-a^2_{t}(u)x^4_t(u)\Big)\dif u,
\end{equation*}
where $a^i_{t}(s)=a^i(t,s)$ and $x^{i}_t(s)=x^i(t,s)$.

In addition, if we impose the following condition
$
\left\langle\dfrac{\partial \varphi}{\partial t}(t,0),\dfrac{\partial \varphi}{\partial s}(t,0)\right\rangle=0
$
then $\dfrac{\partial \varphi}{\partial t}(t,0)=e^5\left(\dfrac{\partial \varphi}{\partial t}(t,0)\right)f_5(t,0)$ and
\begin{equation*}
\varphi(t,0)=\!\!\int_0^t\!\!r(v)\Big(e_5(v,0)-A_5^1(v,0)f_1(v,0)-A_5^2(v,0)f_2(v,0)-A_5^3(v,0)f_3(v,0)\Big)\dif v,
\end{equation*}
where $r(t)=e^5\Big(\dfrac{\partial \varphi}{\partial t}(t,0)\Big)$.

Using all of the above we can study two models of minimal non-horizontal surfaces in $\mathbb{H}^2$.
\begin{ex} Ruled surface
\end{ex}
 Consider the case where $\overline\omega_4^1(f_4)=\overline\omega_4^3(f_4)=0$. In this case $a^i,i=1,\cdots,4$ are constant on the tangent curves on $TM\cap D$. So,
\begin{equation}\label{x1234}
x^i(t,s)=x^i(t)+a^i(t)s, i=1,2,3,4
\end{equation}
where $x^i(t)=x^i(t,0)$ and $a^i(t)=a^i(t,0)$. Also, we get
\begin{equation}\label{x5}
x^5(t,s)=x^5(t)+\frac{s}{2}\Big(a^3(t)x^1(t)+a^4(t)x^2(t)-a^1(t)x^3(t)-a^2(t)x^4(t)\Big).
\end{equation}
Moreover,
\begin{equation*}
\varphi(t,0)=\int_0^tr(v)\Big(e_5(v,0)-A_5^2(v,0)f_2(v,0)\Big)\dif v\,.
\end{equation*}
Now, we write
\begin{align*}
\varphi(t,s)-\varphi(t,0)=&\sum_{i=1}^5\Big(x^i(t,s)-x^i(t,0)\Big)\frac{\partial}{\partial x^i}\\
=&\sum_{i=1}^4\Big(x^i(t,s)-x^i(t,0)\Big)e_i(t)+\Big(x^5(t,s)-x^5(t,0)\\
&+\frac 1 2\Big((x^1(t,s)-x^1(t,0))x^3(t,0)+(x^2(t,s)-x^2(t,0))x^4(t,0)\\
&-(x^3(t,s)-x^3(t,0))x^1(t,0)-(x^4(t,s)-x^4(t,0))x^2(t,0)\Big)\Big)e_5(t)\,.
\end{align*}
It follows from (\ref{x1234}) and (\ref{x5}) that
\begin{align*}
\varphi(t,s)=\varphi(t,0)+s f_4(t,0)\,.
\end{align*}

\begin{ex} Tubular surfaces
\end{ex}
Consider $\overline\omega_4^1(f_4)=b_1(t)$ and $\overline\omega_4^3(f_4)=b_3(t)$ constants along tangent curves on $TM\cap D$ such that $b_1(t)^2+b_3(t)^2\neq0$.
The eigenvalues of the matrix (\ref{system}) are $\pm i\sqrt{b_1(t)^2+b_3(t)^2}$, where $i=\sqrt{-1}$. Resolving the differential equation (\ref{system}) we obtain the fundamental matrix
\begin{equation*}
\Phi(t,s)=\left[
\begin{array}{cccc}
\cos(sb(t))&-\dfrac{b_3(t)\sin(sb(t))}{b(t)}&0&\dfrac{b_1(t)\sin(sb(t))}{b(t)}\\
\dfrac{b_3(t)\sin(sb(t))}{b(t)}&\cos(sb(t))&-\dfrac{b_1(t)\sin(sb(t))}{b(t)}&0\\
0&\dfrac{b_1(t)\sin(sb(t))}{b(t)}&\cos(sb(t))&\dfrac{b_3(t)\sin(sb(t))}{b(t)}\\
-\dfrac{b_1(t)\sin(sb(t))}{b(t)}&0&-\dfrac{b_3(t)\sin(sb(t))}{b(t)}&\cos(sb(t))
\end{array}\right],
\end{equation*} where $b(t)=\sqrt{b_1(t)^2+b_3(t)^2}$. Then
\begin{align*}
a^1(t,s)&=\cos(sb(t))a^1(t)-\frac{b_3(t)\sin(sb(t))}{b(t)}a^2(t)+\frac{b_1(t)\sin(sb(t))}{b(t)}a^4(t),\\
a^2(t,s)&=\frac{b_3(t)\sin(sb(t))}{b(t)}a^1(t)+\cos(sb(t))a^2(t)-\frac{b_1(t)\sin(sb(t))}{b(t)}a^3(t),\\
a^3(t,s)&=\frac{b_1(t)\sin(sb(t))}{b(t)}a^2(t)+\cos(sb(t))a^3(t)+\frac{b_3(t)\sin(sb(t))}{b(t)}a^4(t),\\
a^4(t,s)&=-\frac{b_1(t)\sin(sb(t))}{b(t)}a^1(t)-\frac{b_3(t)\sin(sb(t))}{b(t)}a^3(t)+\cos(sb(t))a^4(t)\,.
\end{align*}
It follows that
{\small{\begin{align*}
x^1(t,s)&=x^1(t)+\frac{\sin(sb(t))}{b(t)}a^1(t)+b_3(t)\frac{\cos(sb(t))-1}{b(t)^2}a^2(t)-b_1(t)\frac{\cos(sb(t))-1}{b(t)^2}a^4(t),\\
x^2(t,s)&=x^2(t)-b_3(t)\frac{\cos(sb(t))-1}{b(t)^2}a^1(t)+\frac{\sin(sb(t))}{b(t)}a^2(t)+b_1(t)\frac{\cos(sb(t))-1}{b(t)^2}a^3(t),\\
x^3(t,s)&=x^3(t)-b_1(t)\frac{\cos(sb(t))-1}{b(t)^2}a^2(t)+\frac{\sin(sb(t))}{b(t)}a^3(t)-b_3(t)\frac{\cos(sb(t))-1}{b(t)^2}a^4(t),\\
x^4(t,s)&=x^4(t)+b_1(t)\frac{\cos(sb(t))-1}{b(t)^2}a^1(t)+b_3(t)\frac{\cos(sb(t))-1}{b(t)^2}a^3(t)+\frac{\sin(sb(t))}{b(t)}a^4(t)\,.
\end{align*}}}
Plugging these terms in (\ref{x5}) we get
\begin{align*}
x^5(t,s)=&\,x^5(t)+\frac{\cos(sb(t))-1}{2b(t)^2}\Big(-(b_1(t)a^2(t)+b_3(t)a^4(t))x^1(t)\\
&+(b_1(t)a^1(t)+b_3(t)a^3(t))x^2(t)-(b_3(t)a^2(t)-b_1(t)a^4(t))x^3(t)\\
&-(b_3(t)a^1(t)+b_1(t)a^3(t))x^4(t)\Big)+\frac{\sin(sb(t))}{2b(t)}\Big(a^3(t)x^1(t)+a^4(t)x^2(t)\\
&-a^1(t)x^3(t)-a^2(t)x^4(t)\Big).
\end{align*}
Then, the parametrization of a minimal non-horizontal surface in $\mathbb{H}^2$ is
{\small{\begin{align*}
\varphi(t,s)-\varphi(t,0)=&\,\frac{\sin(sb(t))}{b(t)}\sum_{i=1}^4a^i(t)e_i(t)\\
&+\frac{\cos(sb(t))-1}{b(t)^2}b_3(t)\Big(a^2(t)e_1(t)-a^1(t)e_2(t)-a^4(t)e_3(t)\\
&+a^3(t)e_4(t)\Big)
+\frac{\cos(sb(t))-1}{b(t)^2}b_1(t)\Big(-a^4(t)e_1(t)+a^3(t)e_2(t)\\
&-a^2(t)e_3(t)+a^3(t)e_4(t)\Big)+\Big(x^5(t,s)-x^5(t,0)\\
&+\frac 1 2(x^1(t,s)-x^1(t,0))x^3(t,0)+\frac 1 2(x^2(t,s)-x^2(t,0))x^4(t,0)\\
&-\frac 1 2(x^3(t,s)-x^3(t,0))x^1(t,0)-\frac 1 2(x^4(t,s)-x^4(t,0))x^2(t,0)\Big)e_5(t)\, ,
\end{align*}}}
Hence,
{\small{\begin{equation}\label{tubular}
\varphi(t,s)=\varphi(t,0)+\frac{\sin(sb(t))}{b(t)}f_4(t,0)-\frac{\cos(sb(t))-1}{b(t)^2}\Big(b_3(t)f_3(t,0)+b_1(t)f_1(t,0)\Big).
\end{equation}}}

Conversely, suppose that $\gamma$ is a transverse curve in $\mathbb{H}^2$ ($\gamma$ is a one dimensional non-horizontal submanifod) and $f_4\in D$ is a unitary vector field along  $\gamma$. According to Corollary \ref{baseD} we can choose $f_2=-Jf_4,f_3=Rf_4, f_1=-Rf_2$ which is an orthonormal basis of $D$ along $\gamma$. We complete this basis to a basis of $T\mathbb H^2$ with $f_5=e_5-\sum^3_{\alpha=1}A_5^\alpha f_\alpha$ such that $\dfrac{\dif\gamma}{\dif t}=\lambda_1 f_4+\lambda_2 f_5$, where $\lambda_1, \lambda_2$ are non-null smooth functions on $\mathbb{H}^2$.

Now, if we define a non-horizontal surface $M$ by (\ref{tubular}) with
$\varphi(t,0)=\gamma(t,0)$, $b_1(t)=-A_5^3(t,0)$ and $b_3(t)=A_5^1(t,0)$, it follows that $M$ is minimal.
\begin{ex}{\emph{
Let $\gamma(t)=\left(r\cos\Big(\frac{t}{r}\Big),0,r\sin\Big(\frac{t}{r}\Big),0,0\right)$ be a transverse curve in $\mathbb{H}^2$ and let $ f_4(t)=\left(0,\cos\Big(\frac{t}{r}\Big),0,\sin\Big(\frac{t}{r}\Big),0\right)$
be an unitary vector field along $\gamma$. Then,
\begin{align*}
f_1(t)&=\sin\Big(\frac{t}{r}\Big)e_1(t)+\cos\Big(\frac{t}{r}\Big)e_3(t),\\
f_2(t)&=\sin\Big(\frac{t}{r}\Big)e_2(t)-\cos\Big(\frac{t}{r}\Big)e_4(t),\\
f_3(t)&=-\cos\Big(\frac{t}{r}\Big)e_1(t)+\sin\Big(\frac{t}{r}\Big)e_3(t)\,.
\end{align*}
Moreover,
\begin{equation*}
\frac{\dif\gamma}{\dif t}=\left(-\sin\Big(\frac{t}{r}\Big),0,\cos\Big(\frac{t}{r}\Big),0,0\right)=\frac{r}{2}\left(\frac{2}{r}\cos\Big(\frac{2t}{r}\Big)f_1(t)
+\frac{2}{r}\sin\Big(\frac{2t}{r}\Big)f_3(t)+e_5(t)\right)
\end{equation*}
and
\begin{equation*}
f_5(t)=e_5(t)+\frac{2}{r}\cos\Big(\frac{2t}{r}\Big)f_1(t)+\frac{2}{r}\sin\Big(\frac{2t}{r}\Big)f_3(t).
\end{equation*}
So,
\begin{equation*}
b_1(t)=A_5^3(t,0)=-\frac{2}{r}\sin\Big(\frac{2t}{r}\Big),\quad \mbox{and}\quad b_3(t)=-A_5^1(t,0)=\frac{2}{r}\cos\Big(\frac{2t}{r}\Big)
\end{equation*}
 and we obtain that
 $
 b(t)=\sqrt{b_1(t)^2+b_3(t)^2}=\dfrac{2}{r}.
 $
Therefore, the parametrization $\varphi(t,s)$ of minimal non-horizontal surface in $\mathbb{H}^2$ generated by $\gamma(t)$ and the unitary vector field $f_4(t)$ is
\begin{align*}
\varphi(t,s)=&\frac{r}{2}\Big(\cos\Big(\frac{t}{r}\Big)\Big(1+\cos\Big(\frac{2s}{r}\Big)\Big),\sin\Big(\frac{2s}{r}\Big)\cos\Big(\frac{t}{r}\Big),\\
&\sin\Big(\frac{t}{r}\Big)\Big(1+\cos\Big(\frac{2s}{r}\Big)\Big),\sin\Big(\frac{2s}{r}\Big)\sin\Big(\frac{t}{r}\Big),0\Big).
\end{align*}
This surface is non-horizontal in all points except $(0,0,0,0,0)$.}}
\end{ex}
\section{Minimal hypersufaces}
Let $
M=\{x\in \mathbb G :\phi(x)=0\,\, \mbox{e}\,\, \mbox{grad}_{D}\phi(x)\neq 0\}
$
 be a non-horizontal hypersurface of $\mathbb G$,
where $\phi:\mathbb G\rightarrow\mathbb R$ is a real function $C^\infty$ and $\mbox{grad}_{D}$ denotes the horizontal gradient operator defined by
\begin{equation*}
\mbox{grad}_D\phi=\sum_{i=1}^{d_1}\phi_ie_i\,\,\, \mbox{or}\,\,\, \dif\phi(X)=\langle \mbox{grad}_{D}\phi,X\rangle,\, \forall\, X\in D,
 \end{equation*}
 where $\phi_i=e_i(\phi)$.

To introduce the notion of divergence,  observe that the volume form \eqref{volumeG} is parallel with respect to $\overline\nabla$. According to \cite[p.~282]{Kobayashi1963}, for every vector field $X$ on $\mathbb{G}$ we have
\begin{equation*}
\mbox{div}_{\mathbb G}X=\sum_{k=1}^{n}e^k(\overline\nabla_{e_k}X).
\end{equation*}
Let
$f_1=\frac{\mbox{grad}_D\phi}{N}
$,
where
$
N=\sqrt{\sum_{i=1}^{d_1}\phi_i^2},
$
and let $f_2,\ldots,f_n$ be the adapted basis on $TM$.

\begin{prop}\label{divHiper}
$$-H_{f_1}=\emph{div}_{\mathbb G}\left(\frac{\emph{grad}_{D}\phi}{|\emph{grad}_{D}\phi|}\right)\,.$$
\end{prop}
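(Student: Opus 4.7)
The plan is to evaluate both sides of the identity in the adapted orthonormal frame $f_{1},\ldots,f_{n}$ and match them through the connection $1$-forms $\overline{\omega}_{j}^{i}$. Since $f_{1}=\mathrm{grad}_{D}\phi/N$ is already defined on the open set where $\mathrm{grad}_{D}\phi\neq 0$, the whole adapted frame extends naturally to a neighborhood of $M$ and the computation can be carried out there. Two identities will do most of the work: from $|f_{1}|=1$ and compatibility of $\overline\nabla$ with the metric one obtains $\overline\omega_{1}^{1}=0$, while the explicit formula $\overline\omega_{1}^{i}=\sum_{k}a_{i}^{k}\,\dif a_{1}^{k}$ from Section~\ref{nhs} (equivalently, the fact that $f_{1}\in D$ and $\overline\nabla$ preserves $D$) forces $\overline\omega_{1}^{i}=0$ for $i>d_{1}$. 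In particular $\overline\nabla_{X}f_{1}=\sum_{i=2}^{d_{1}}\overline\omega_{1}^{i}(X)\,f_{i}\in TM\cap D$.

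For the right-hand side I will use the equality of volume forms $e^{1}\wedge\cdots\wedge e^{n}=f^{1}\wedge\cdots\wedge f^{n}$, which guarantees that the defining trace $\sum_{k}e^{k}(\overline\nabla_{e_{k}}X)$ agrees with its $f$-frame counterpart, so that
\[
\mathrm{div}_{\mathbb G}(f_{1})=\sum_{k=1}^{n}f^{k}(\overline\nabla_{f_{k}}f_{1})=\sum_{k=2}^{d_{1}}\overline\omega_{1}^{k}(f_{k}).
\]
For the left-hand side, the hypersurface case $p=1$ of formula~(\ref{A}) gives $A_{f_{1}}(X)=-\sum_{i=2}^{d_{1}}\overline\omega_{1}^{i}(X)\,f_{i}$; taking the trace of $A_{f_{1}}$ over $TM$ in the basis $f_{2},\ldots,f_{n}$, only the indices $k=2,\ldots,d_{1}$ contribute, because for $k>d_{1}$ the image of $A_{f_{1}}$ lies in $TM\cap D=\mathrm{span}(f_{2},\ldots,f_{d_{1}})$ and so $f^{k}(A_{f_{1}}(f_{k}))=0$. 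This yields $\mathrm{trace}\,A_{f_{1}}=-\sum_{k=2}^{d_{1}}\overline\omega_{1}^{k}(f_{k})$, so that $-H_{f_{1}}=\mathrm{trace}\,A_{f_{1}}$ coincides with the expression above for $\mathrm{div}_{\mathbb G}(f_{1})$.

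The one genuinely nontrivial step is the frame-change remark: $\mathrm{div}_{\mathbb G}$ is defined via the $e$-frame, but the computation above needs its $f$-frame expression. This is precisely where the identity of the two volume forms, together with the basis-independence of the trace of the $(1,1)$-tensor $\overline\nabla X$, is essential; once it is in place the rest is a parallel bookkeeping exercise governed by the two vanishings $\overline\omega_{1}^{1}=0$ and $\overline\omega_{1}^{i}=0$ for $i>d_{1}$.
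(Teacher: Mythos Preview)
Your strategy is the same as the paper's: both sides are reduced to the sum $\sum_{k=2}^{d_{1}}\overline\omega_{1}^{k}(f_{k})$. The paper does this by staying in the $e$-frame and expanding $f_{1}=\sum_{j}(\phi_{j}/N)e_{j}$ explicitly, while you pass to the $f$-frame via the basis-independence of the trace of $Y\mapsto\overline\nabla_{Y}f_{1}$; these are equivalent organizations of the same computation, and your justification of the frame change is clean.

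There is, however, a genuine sign slip in your final step. You correctly obtain
\[
\mathrm{div}_{\mathbb G}(f_{1})=\sum_{k=2}^{d_{1}}\overline\omega_{1}^{k}(f_{k})
\qquad\text{and}\qquad
\mathrm{trace}\,A_{f_{1}}=-\sum_{k=2}^{d_{1}}\overline\omega_{1}^{k}(f_{k}),
\]
but then assert that $-H_{f_{1}}=\mathrm{trace}\,A_{f_{1}}$ ``coincides'' with $\mathrm{div}_{\mathbb G}(f_{1})$. By your own formulas these differ by a sign, so what your argument actually proves is $H_{f_{1}}=\mathrm{div}_{\mathbb G}(f_{1})$. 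If you compare with the paper's opening line, $-H_{f_{1}}=-\sum_{i=2}^{d_{1}}\langle A_{f_{1}}(f_{i}),f_{i}\rangle$, you will see the same issue: Definition~\ref{curvmedia} gives $-H_{f_{1}}=\mathrm{trace}\,A_{f_{1}}=+\sum_{i}\langle A_{f_{1}}(f_{i}),f_{i}\rangle$, so the paper inserts a compensating minus sign there. The identity consistent with Definition~\ref{curvmedia} and with the identification $H_{f_{\alpha}}=\sum_{i=p+1}^{d_{1}}\overline\omega_{\alpha}^{i}(f_{i})$ used in the proof of Theorem~\ref{firstvar} is $H_{f_{1}}=\mathrm{div}_{\mathbb G}(f_{1})$. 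You should flag this explicitly rather than let the two expressions silently ``match''.
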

\begin{proof}  From the definition of the mean curvature it follows that
\begin{align*}
-H_{f_1}=&-\sum_{i=2}^{d_1}\langle A_{f_1}(f_i),f_i\rangle=\sum_{i=2}^{d_1}\langle \overline\nabla_{f_i}f_1,f_i\rangle=\sum_{i=2}^{d_1}\sum_{j=1}^{d_1}\left\langle \overline\nabla_{f_i}\Big(\frac{\phi_j}{N}e_j\Big),f_i\right\rangle\\
=&\sum_{j=1}^{d_1}\sum_{i=2}^{d_1}\langle e_j,f_i\rangle {f_i}\Big(\frac{\phi_j}{N}\Big)=\sum_{j=1}^{d_1}\Big(e_j-\langle e_j,f_1 \rangle f_1\Big)\Big(\frac{\phi_j}{N}\Big)\\
=&\sum_{j=1}^{d_1}e_j\Big(\frac{\phi_j}{N}\Big)-\sum_{j=1}^{d_1}\langle e_j,f_1 \rangle f_1\Big(\langle e_j,f_1 \rangle\Big)\\
=&\,\mbox{div}_{\mathbb G}\Big(\frac{\mbox{grad}_{D}\phi}{|\mbox{grad}_{D}\phi|}\Big)-\frac 1 2 f_1\Big(\sum_{j=1}^{d_1}\langle e_j,f_1 \rangle^2\Big)=\mbox{div}_{\mathbb G}\left(\frac{\mbox{grad}_{D}\phi}{|\mbox{grad}_{D}\phi|}\right),
\end{align*}
since $\sum_{j=1}^{d_1}\langle e_j,f_1 \rangle^2=1$.
\end{proof}
\begin{cor}
Let $M\subset\mathbb{H}^n$ be a  non-horizontal hypersurface defined by the function $\phi=u(x_1,\cdots,x_{2n})-x_{2n+1}=0$. Then, $M$ is minimal if and only if
\begin{equation}\label{Apgrafico}
\sum_{i=1}^{2n}u_{i,i} -\frac{1}{N^2}\sum_{i,j=1}^{2n}\phi_i\phi_ju_{i,j}=0,
\end{equation}
where $u_{i,j}=\frac{\partial u}{\partial x_i\partial x_j}, \phi_j=e_j(\phi)$ and $N=\sqrt{\phi_1^2+\cdots+\phi_{2n}^2}$.
\end{cor}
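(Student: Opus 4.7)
The plan is to combine Corollary~\ref{eqminhiper} (which reduces minimality of a non-horizontal hypersurface to the condition $H_{f_1}=0$) with Proposition~\ref{divHiper} (which identifies $-H_{f_1}$ with the horizontal divergence $\mbox{div}_{\mathbb G}(\mbox{grad}_{D}\phi/|\mbox{grad}_{D}\phi|)$). Thus the task reduces to computing this divergence for $\phi=u(x_1,\ldots,x_{2n})-x_{2n+1}$ on $\mathbb H^n$ and recognizing the result as the expression on the left-hand side of \eqref{Apgrafico}.

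First I would expand the divergence using $\overline\nabla e_i=0$ and the fact that $e^k(e_i)=\delta^i_k$: writing $\mbox{grad}_D\phi/N=\sum_{i=1}^{2n}(\phi_i/N)e_i$ one obtains
\begin{equation*}
\mbox{div}_{\mathbb G}\Big(\frac{\mbox{grad}_{D}\phi}{N}\Big)=\sum_{i=1}^{2n}e_i\Big(\frac{\phi_i}{N}\Big)=\frac{1}{N}\sum_{i=1}^{2n}e_i(\phi_i)-\frac{1}{N^3}\sum_{i,j=1}^{2n}\phi_i\phi_j\,e_i(\phi_j),
\end{equation*}
where the last step uses $e_i(N)=\tfrac{1}{N}\sum_j\phi_j\,e_i(\phi_j)$. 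Using the explicit left-invariant fields of $\mathbb H^n$ one then has $\phi_i=u_i+\tfrac12 x^{i+n}$ for $i\le n$ and $\phi_{i+n}=u_{i+n}-\tfrac12 x^{i}$ (the $x^{2n+1}$-derivatives of $\phi$ drop out since $u$ is independent of $x^{2n+1}$). A direct check then shows $e_i(\phi_i)=u_{i,i}$ for every $i$, because both the vertical terms $\partial/\partial x^{2n+1}$ acting on $u_i$ or on the linear pieces $\tfrac12 x^{i+n}$, $-\tfrac12 x^i$ vanish.

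The main obstacle is the cross terms $e_i(\phi_j)$ with $i\neq j$: these are \emph{not} symmetric in $i,j$, since pairs of the form $(i,i+n)$ pick up an extra $\pm 1/2$ coming from $\partial_{i+n}x^{i+n}=1$ and from the half-off-diagonal $x^i$ pieces. The resolution is that the weight $\phi_i\phi_j$ is symmetric under $i\leftrightarrow j$, so
\begin{equation*}
\sum_{i,j=1}^{2n}\phi_i\phi_j\,e_i(\phi_j)=\frac{1}{2}\sum_{i,j=1}^{2n}\phi_i\phi_j\bigl(e_i(\phi_j)+e_j(\phi_i)\bigr).
\end{equation*}
A brief case analysis (the three cases $i,j\le n$; $i,j>n$; and $i\le n<j$) shows that the anomalous $\pm 1/2$ contributions cancel in the symmetric combination and $e_i(\phi_j)+e_j(\phi_i)=2u_{i,j}$ in every case. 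Substituting back yields
\begin{equation*}
-H_{f_1}=\frac{1}{N}\Big(\sum_{i=1}^{2n} u_{i,i}-\frac{1}{N^2}\sum_{i,j=1}^{2n}\phi_i\phi_j\, u_{i,j}\Big),
\end{equation*}
and $M$ is minimal if and only if \eqref{Apgrafico} holds.
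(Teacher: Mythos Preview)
Your proposal is correct and follows essentially the same route as the paper: both reduce to computing $\sum_i e_i(\phi_i/N)$ via Proposition~\ref{divHiper}, note that $e_i(\phi_i)=u_{i,i}$, and then observe that the off-diagonal $\pm\tfrac12\delta_{ij}$ contributions in $e_i(\phi_j)$ cancel against the symmetric weight $\phi_i\phi_j$. The paper simply lists the four cases of $e_i(\phi_j)$ and jumps to the answer, while you make the symmetrization $\tfrac12(e_i(\phi_j)+e_j(\phi_i))=u_{i,j}$ explicit; incidentally, your signs for $\phi_i$ agree with the vector fields defined in Section~\ref{hspace}, whereas the signs written in the paper's proof are flipped, though this has no effect on the conclusion.
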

\begin{proof} For $j=1,\cdots,n, \phi_j=u_{x_j}-\frac 1 2 x_{j+n},\, \phi_{j+n}=u_{x_{j+n}}+\frac 1 2 x_j$,  where $u_{x_{k}}=\frac{\partial u}{\partial x_k}$. So
$e_i(\phi_j)=u_{x_i,x_j},\, e_i(\phi_{j+n})=u_{x_i,x_{j+n}}+\frac 1 2\delta_{ij},\,
e_{i+n}(\phi_j)=u_{x_{i+n},x_j}-\frac 1 2 \delta_{ij}$ and $e_{i+n}(\phi_{j+n})=u_{x_{i+n},x_{j+n}}$, for $i,j=1,\cdots,n$.
Therefore,
\begin{align*}
-H_{f_1}=&\sum_{i=1}^{2n}e_i\frac{\phi_i}{N}=\sum_{i=1}^{2n}\left(\frac{e_i(\phi_i)}{N}-\phi_i\sum_{j=1}^{2n}\frac{\phi_je_i(\phi_j)}{N^3}\right)\\
=&\frac{1}{N}\left(\sum_{i=1}^{2n}u_{i,i} -\frac{1}{N^2}\sum_{i,j=1}^{2n}\phi_i\phi_ju_{i,j} \right)\,.
\end{align*}
   \end{proof}
From this proposition it follows immediately that the \emph{hyperbolic paraboloid} with
\begin{equation*}
u(x_1,\ldots,x_{2n})=\frac 1 4\sum_{i=1}^n(x_i^2-x_{i+n}^2)
\end{equation*} is minimal as in \cite{Montefalcone2012}.


\end{document}